\newtheorem{thm}{Theorem}
\newtheorem{lemma}[thm]{Lemma}
\theoremstyle{definition}
\def\question#1{\begin{center}\fbox{\parbox{10cm}{{\bf Question: }#1}}\end{center}}
\title{On quadratic orbital networks}
\author{Oliver Knill}
\date{December 1, 2013}
\address{
        Department of Mathematics \\
        Harvard University \\
        Cambridge, MA, 02138
        }
\subjclass{Primary:  05C82, 90B10,91D30,68R10  }
\keywords{Graph theory}
\begin{document}
\maketitle
\begin{abstract}
These are some informal remarks on quadratic orbital networks over finite fields $Z_p$. 
We discuss connectivity, Euler characteristic, number of cliques, planarity, 
diameter and inductive dimension. We prove that for $d=1$ generators, the Euler characteristic
is always nonnegative and for $d=2$ and large enough $p$ the Euler characteristic is negative. 
While for $d=1$, all networks are planar, we suspect that for $d \geq 2$ and large enough $p$, all
networks are non-planar. As a consequence on bounds for the number of complete subgraphs of a
fixed dimension, the inductive dimension of all these networks goes $1$ as $p \to \infty$. 
\end{abstract}

\section{Polynomial orbital networks}

Given a field $R=Z_p$, we study orbital graphs $G =(V,E)$ defined by polynomials $T_i$
which generate a monoid $T$ acting on $R$. We think of $(R,T)$ as a {\bf dynamical system} where
positive {\bf time} $T$ is given by the monoid of words $w = w_1 w_2 \dots w_k$ 
using the generators $w_k \in A=\{T_1, \dots ,T_d \; \}$ as alphabet and where $\{ T^wx \; | \; w \in R \; \}$
is the {\bf orbit of $x$}. The {\bf orbital network} \cite{GK1,KnillAffine,KnillMiniatures}
is the finite simple 
graph $G$ where $V=R$ is the set of vertices and where two vertices $x,y \in V$ are connected if there 
exists $T_i$ such that $T_i(x)=y$ or $T_i(y)=x$. The network generated by the system consists of the union
of all orbits. As custom in dynamics, one is interested in invariant 
components of the system and especially forward attractors $\Omega(x)$ of a point $x$
as well as the {\bf garden of eden}, the set of points which are not in the image of any $T_i$.
We are also interested in the {\bf inductive dimension} of the network. This relates to the existence and number of 
cliques, which are complete subgraphs of $G$. \\

Since most questions one can ask are already unsettled for quadratic polynomial maps, 
we restrict here to polynomial maps of the form $T_i(x) = x^2+a_i$ and call the corresponding finite
simple graphs {\bf quadratic orbital networks}. These systems have been studied since a while \cite{Robert}.
For the case $T(x)=x^2$ especially, see \cite{SomerKrizek}.
For a given prime $p$ and a fixed number of generators $d \geq 1$ we have a natural probability space 
$X_p^d$ of all ordered $d$ tuples $a_1<a_2< \dots < a_d$ each generating an orbital network. 
We can now not only study properties for individual networks but the probability that some 
event happens and especially the asymptotic properties in the limit $p \to \infty$. \\

\begin{figure}[H]
\scalebox{0.14}{\includegraphics{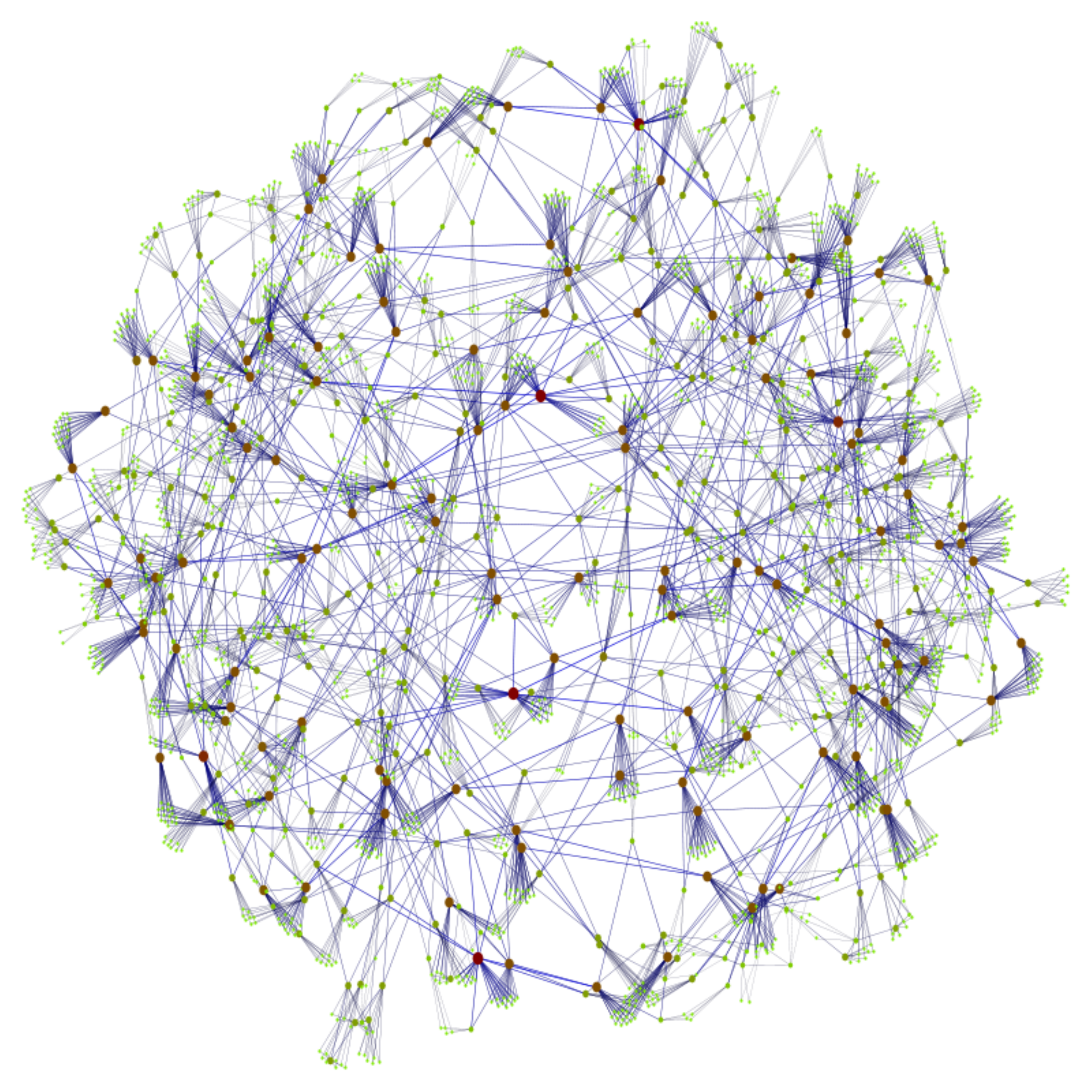}}
\scalebox{0.14}{\includegraphics{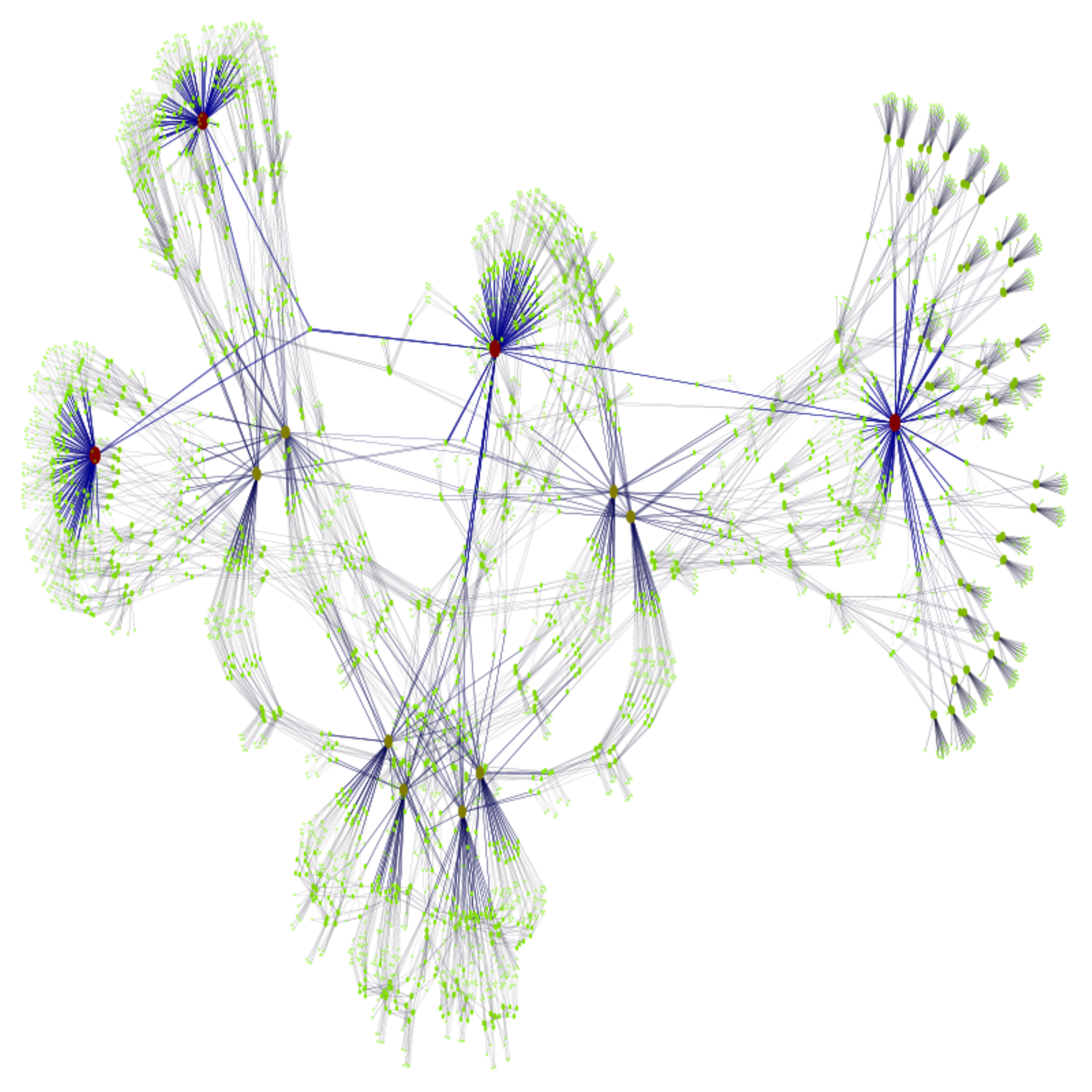}}
\scalebox{0.14}{\includegraphics{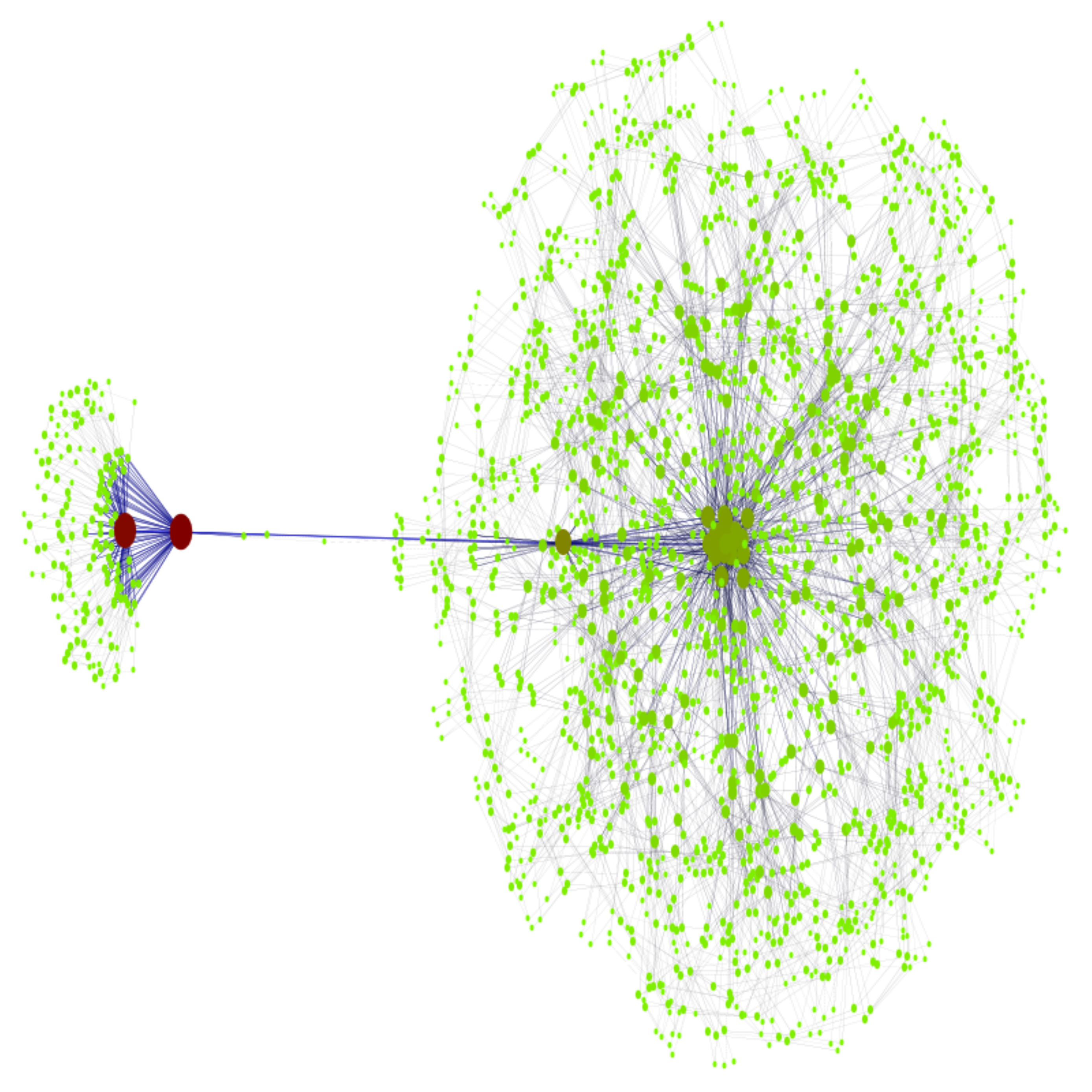}}
\caption{
The orbital network generated by $T(x)=x^2-1,S(x)=x^2+1$ on $Z_{2310}$, 
on $Z_{4096}$ and $T(x)=x^2-1$ and $S(x)=x^2$ on $Z_{2187}$. The rings
$Z_n$ are not field but rather consist of smooth $n$ which leads 
to special structures. 
} \end{figure}

Many of the questions studied here have been looked at in other contexts. 
The dynamical system given by $T(x)=x^2$ on the field $Z_p$ for example was
well known already to Gauss, Euler, Fermat and their contemporaries.
What we call the {\bf garden of eden} is in this case the set 
of {\bf quadratic non-residues}. As Gauss knew already, half of the vertices different from $0$ are there.
The dynamical system has $T(x)=x^2$ has a single component if and only if $p$ is a Fermat
prime and two components $\{0\},Z_p^*$ if and only if $2$ is a {\bf primitive root} modulo $p$. 
One believes that asymptotically $37.4...$ percent of the primes $p$ have $2$ as a primitive root
leading to connectedness on $Z_p^*$. 
This probability is related to the {\bf Artin constant} $\prod_p (1-1/(p(p-1))) = 0.3739558...$. \\

\begin{figure}
\scalebox{0.14}{\includegraphics{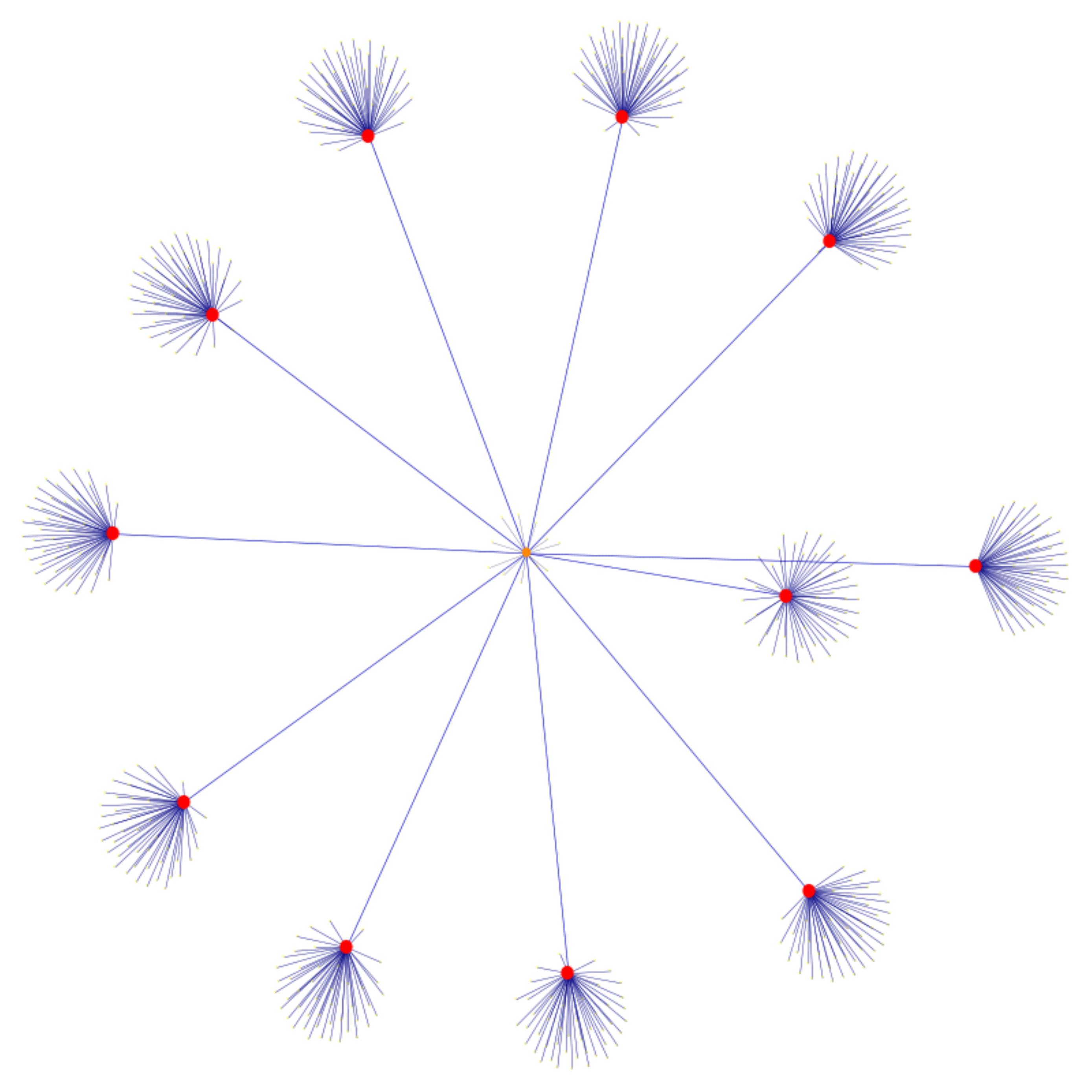}}
\scalebox{0.14}{\includegraphics{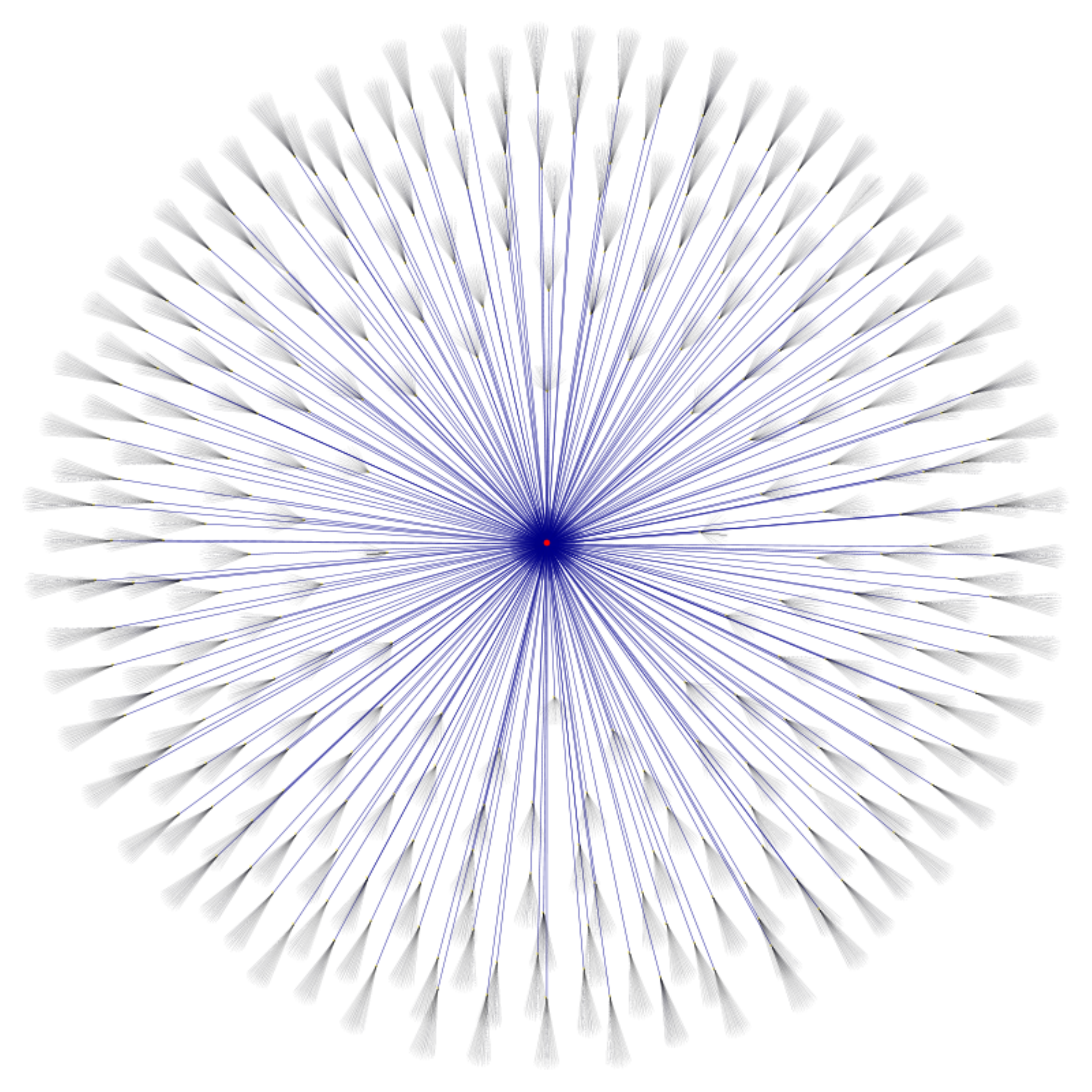}}
\scalebox{0.14}{\includegraphics{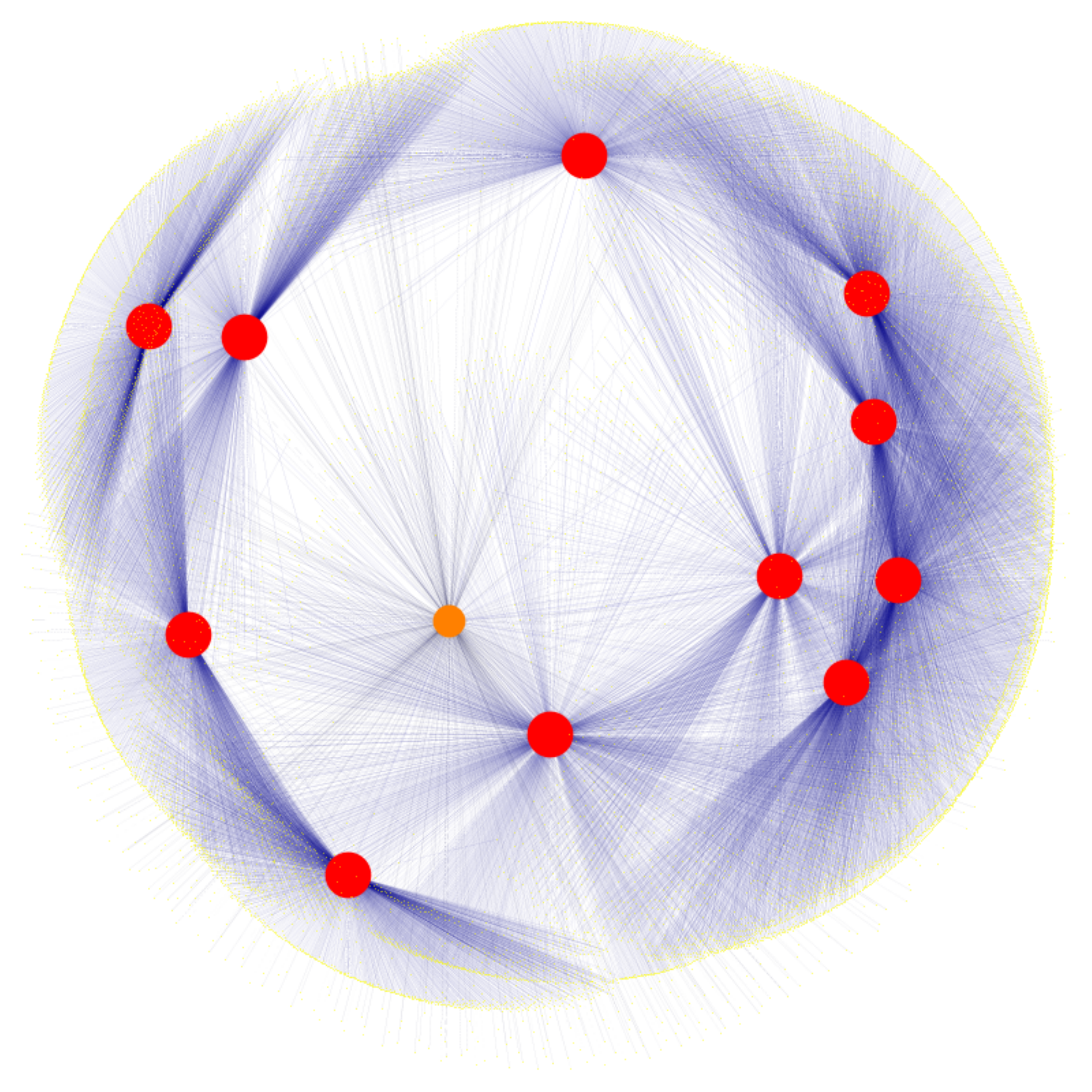}}
\caption{
The orbital network generated by $T(x)=p x^2$ on $Z_{p^2}$ with $p=23$ 
and $T(x)=p^2 x^2$ on $Z_{p^3}$. 
} \end{figure}

We have worked with affine maps in \cite{KnillAffine}. Even very simple questions are interesting. 
Working in the ring $Z_{p^2}$ for example, with linear maps $T(x)=ax$ is interesting because solutions
to $a^{p-1}=1$ modulo $p^2$ are called {\bf Fermat solutions} \cite{Hua}
There is still a lot to explore for affine maps but the complexity we see for quadratic maps is 
even larger. \\

Understanding the  system $T(x)=x^2$ on the ring $F_{pq}$ for two primes $p,q$ is the 
{\bf holy grail} of integer factorization as Fermat has known already. Finding a second root of a number
$a^2$ is equivalent to factorization as virtually all advanced factorization methods like the continued fraction, 
the Pollard rho or the quadratic sieve method make use of: if $b^2=a^2$, then ${\bf gcd}(b-a,n)$ reveals one of the factors $p$ or $q$. 
The fact that integer factorization is hard shows that understanding even the system
$T(x)=x^2$ is difficult if $n$ is composite. In fact, it is difficult even to find the second square root of $1$. If 
one could, then factorization would be easy. \\

\begin{figure}[H]
\scalebox{0.13}{\includegraphics{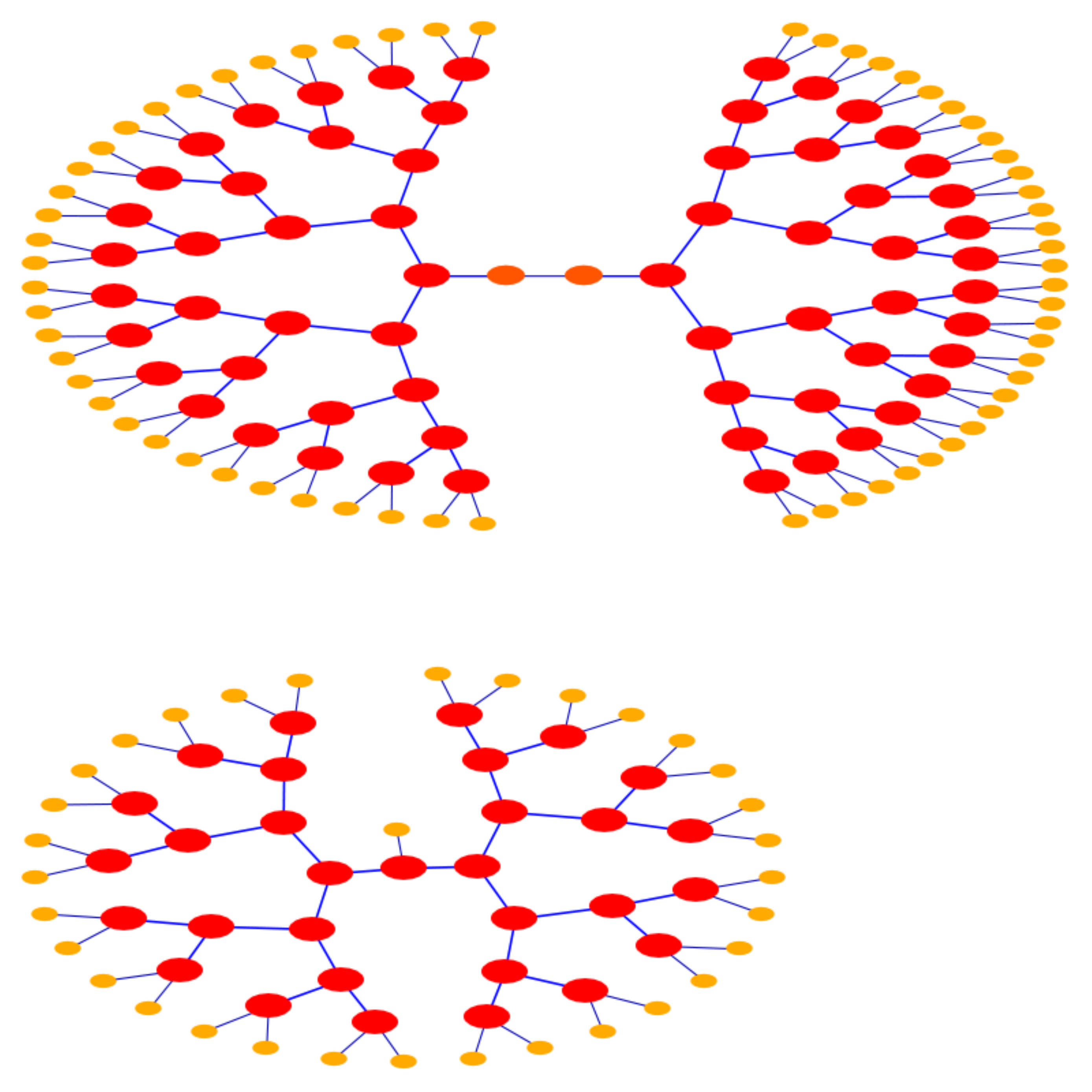}}
\scalebox{0.13}{\includegraphics{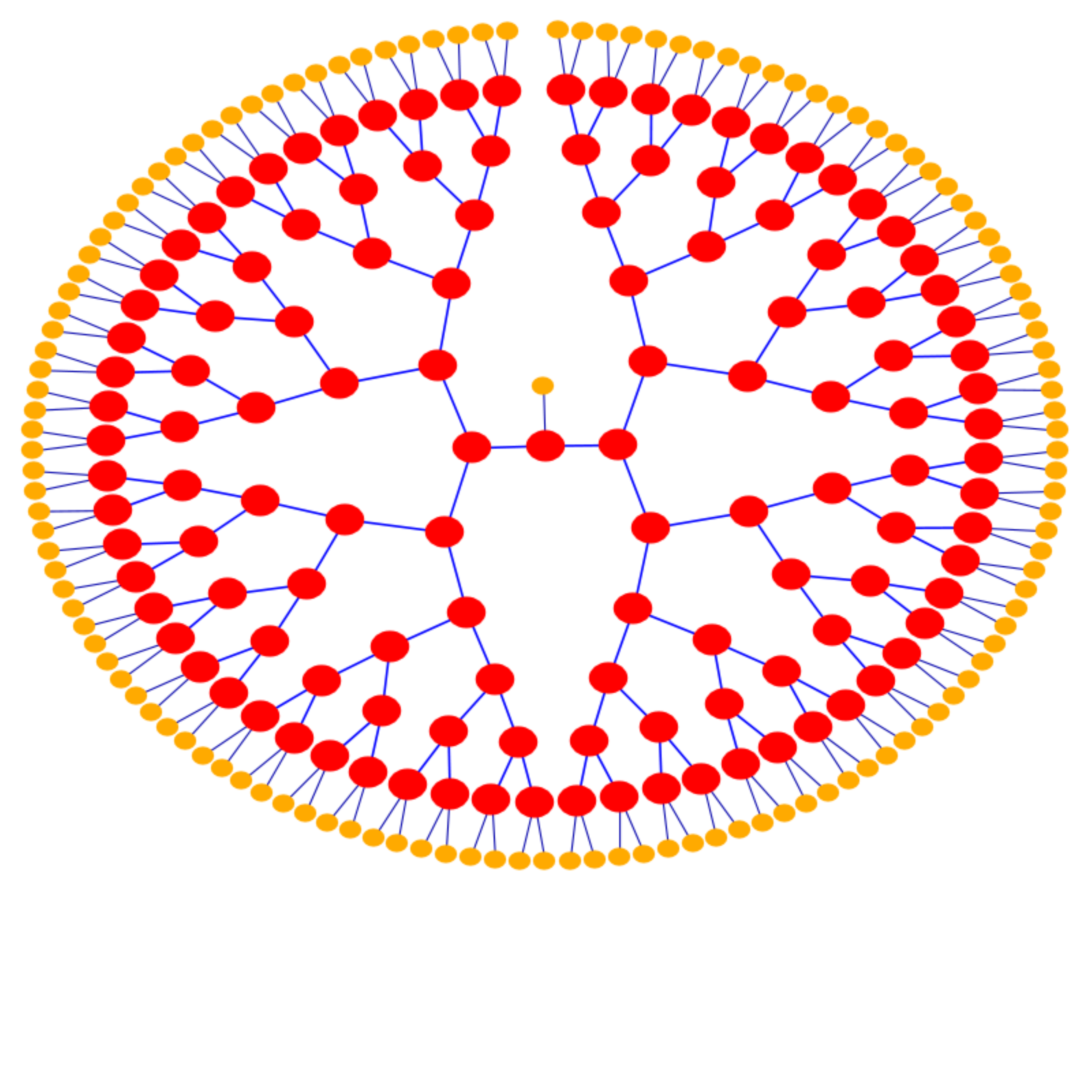}}
\scalebox{0.13}{\includegraphics{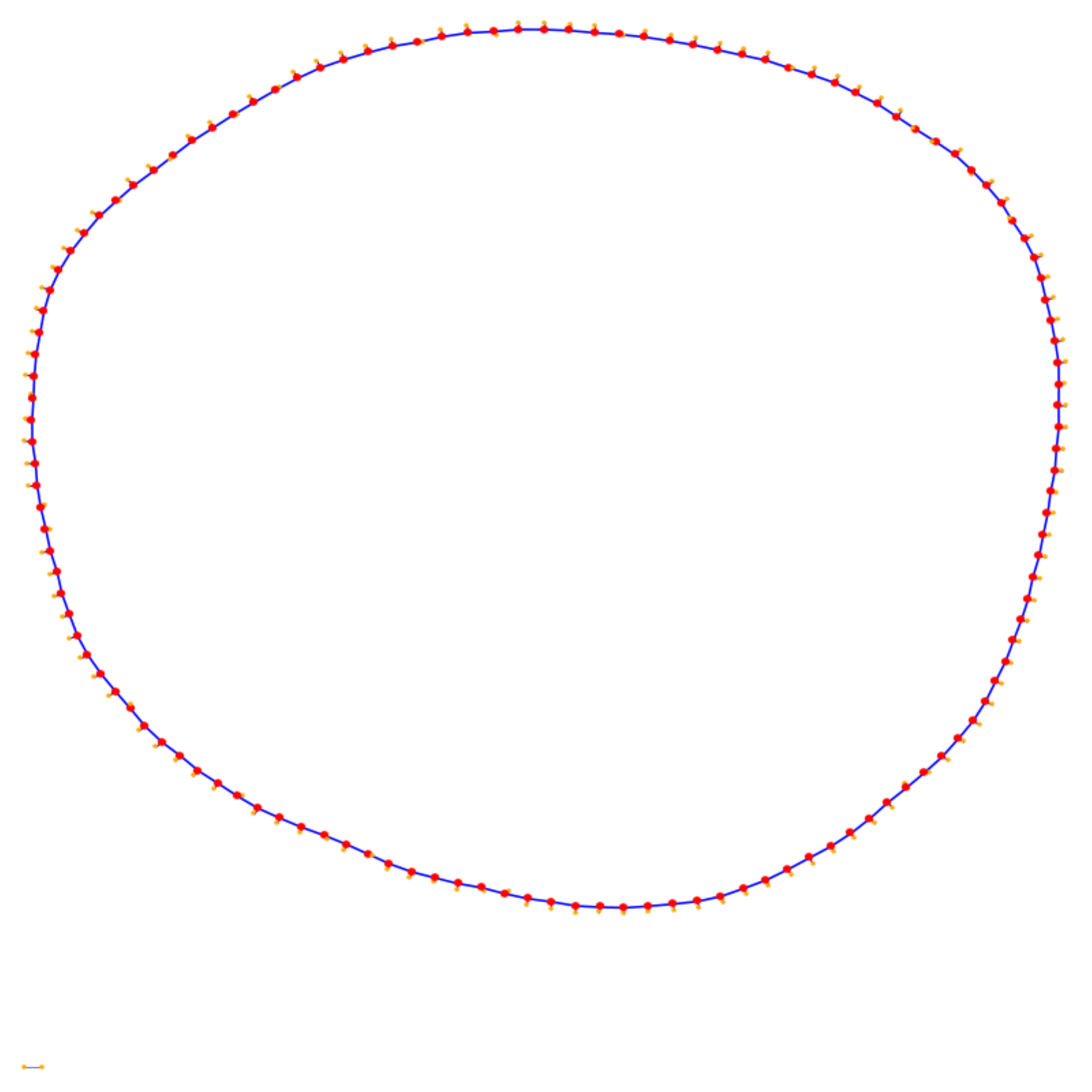}}
\caption{
The orbital network to $T(x)=x^2$ on $Z_p$. In the first
case, with $p=193$, we have two connected components
for $p=257$, a Fermat prime, we have one tree.
For $p=263$, there is one ring and one isolated vertex $0$. 
This always happens if $2$ is a primitive root modulo $p$. 
} \end{figure}

\begin{figure}[H]
\scalebox{0.3}{\includegraphics{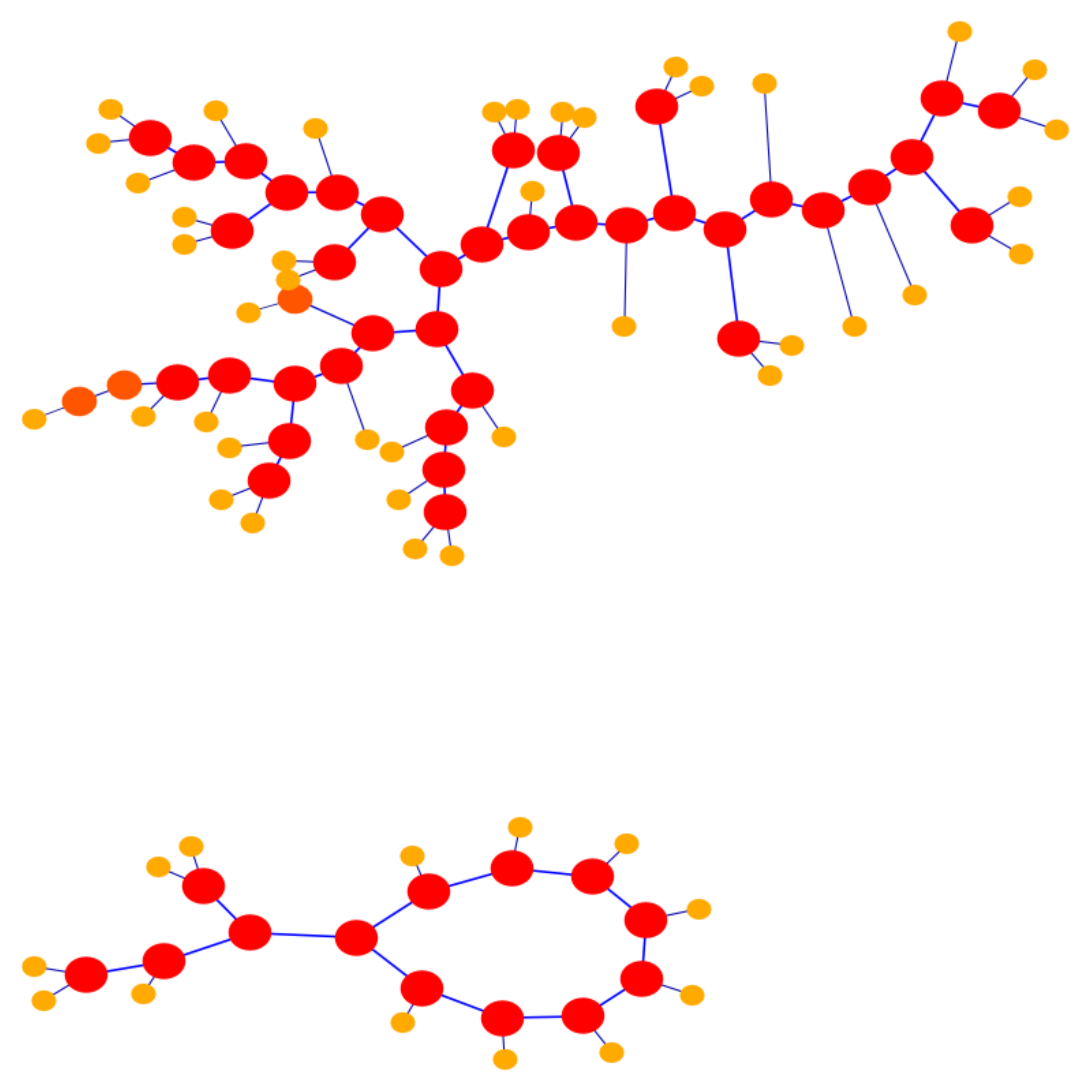}}
\caption{
The orbital network to $T(x)=x^2+1$ on $Z_{107}$ has two components. 
The smaller one 
has Euler characteristic $0$
and the larger one 
is a tree. For any network generated
by one map, every connected component
contains maximally one closed loop to which many transient 
trees lead. 
} \end{figure}

While for $d=1$, connected quadratic orbitals become rare in the limit $p \to \infty$ and 
for $d=2$, the probability goes to $1$, it is difficult to find examples of 
disconnected quadratic orbital graphs of $d=3$.
Initial experiments made us believe that there are none. In the mean time we found one. 

\begin{figure}[H]
\scalebox{0.13}{\includegraphics{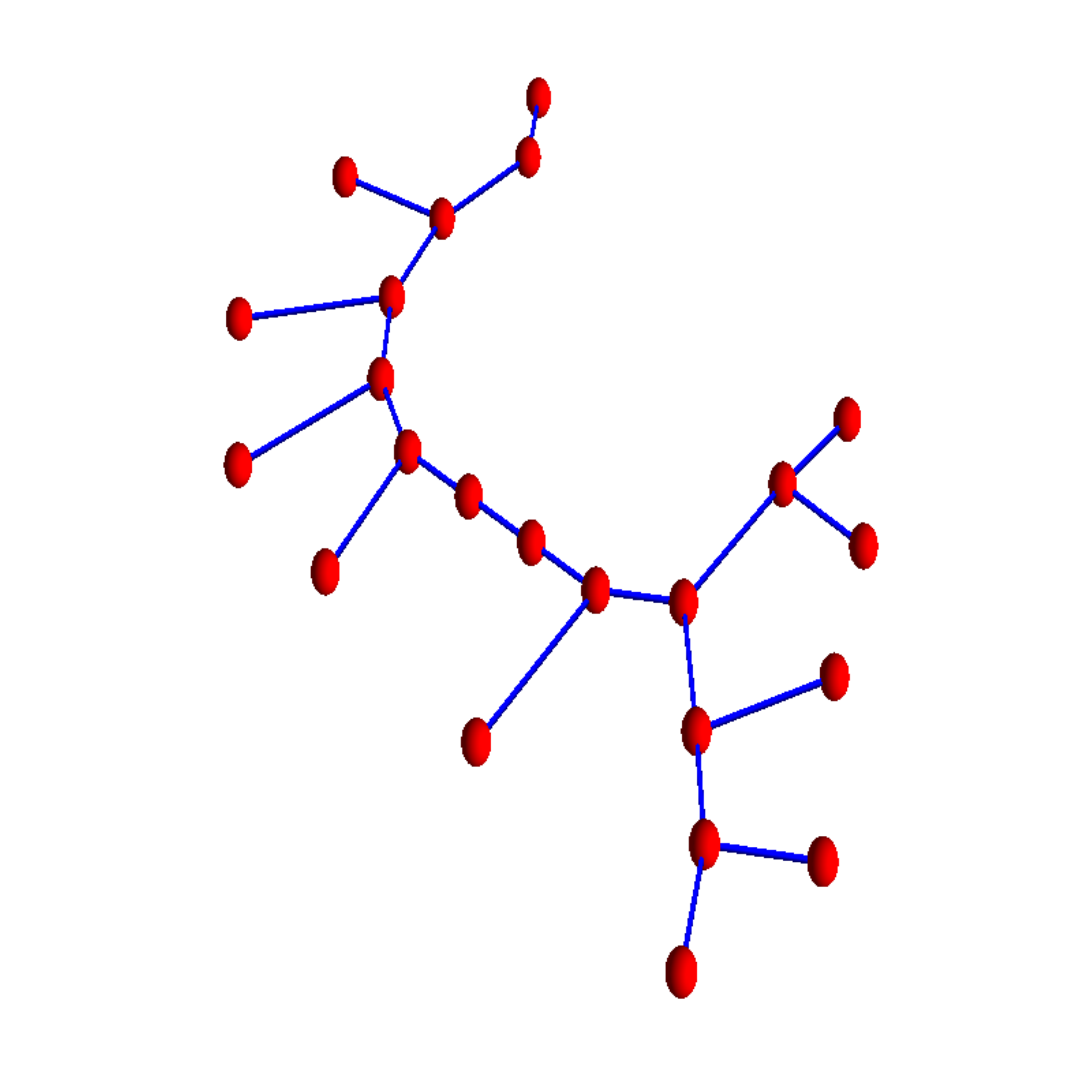}}
\scalebox{0.13}{\includegraphics{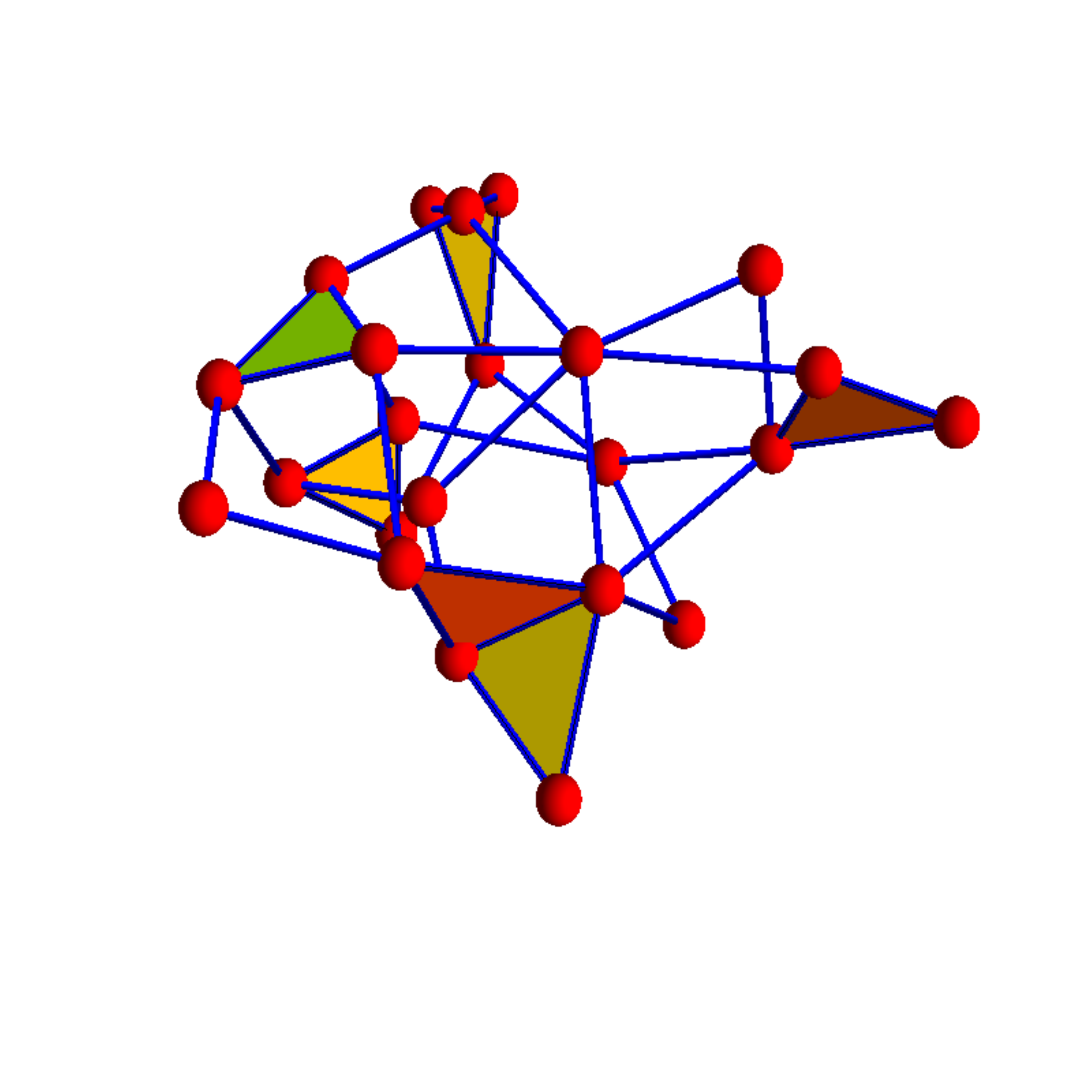}}
\scalebox{0.13}{\includegraphics{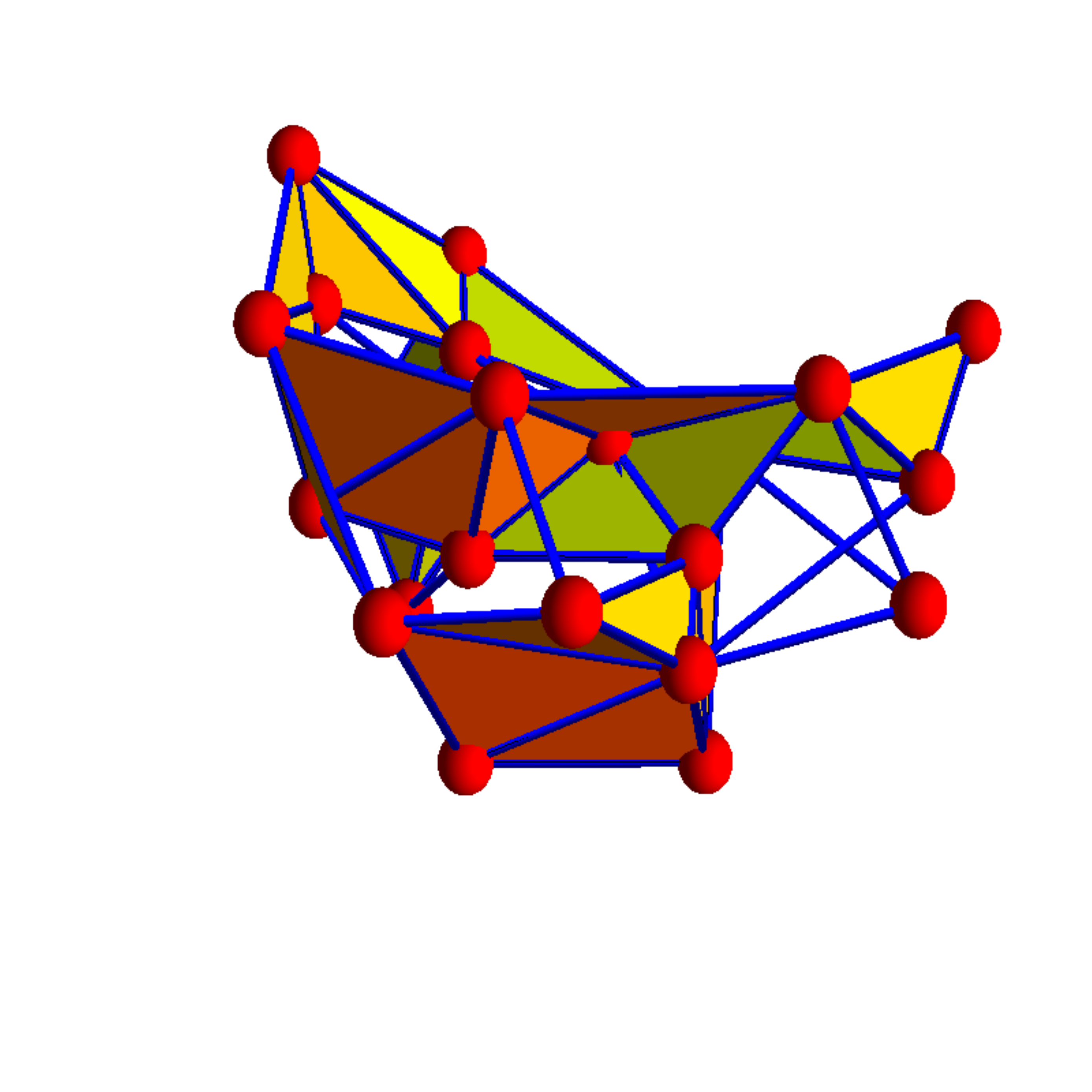}}
\caption{
Orbital networks on $Z_{23}$ generated by one, two
or three transformations $T_k(x)=x^2+k$.
} \end{figure}

It is still possible that there is a largest 
prime for which they still exist and that all quadratic graphs with $3$ different generators 
are connected if $p$ is large enough. Here is an example of a disconnected graph with $d=3$: 

\begin{figure}[H]
\scalebox{0.3}{\includegraphics{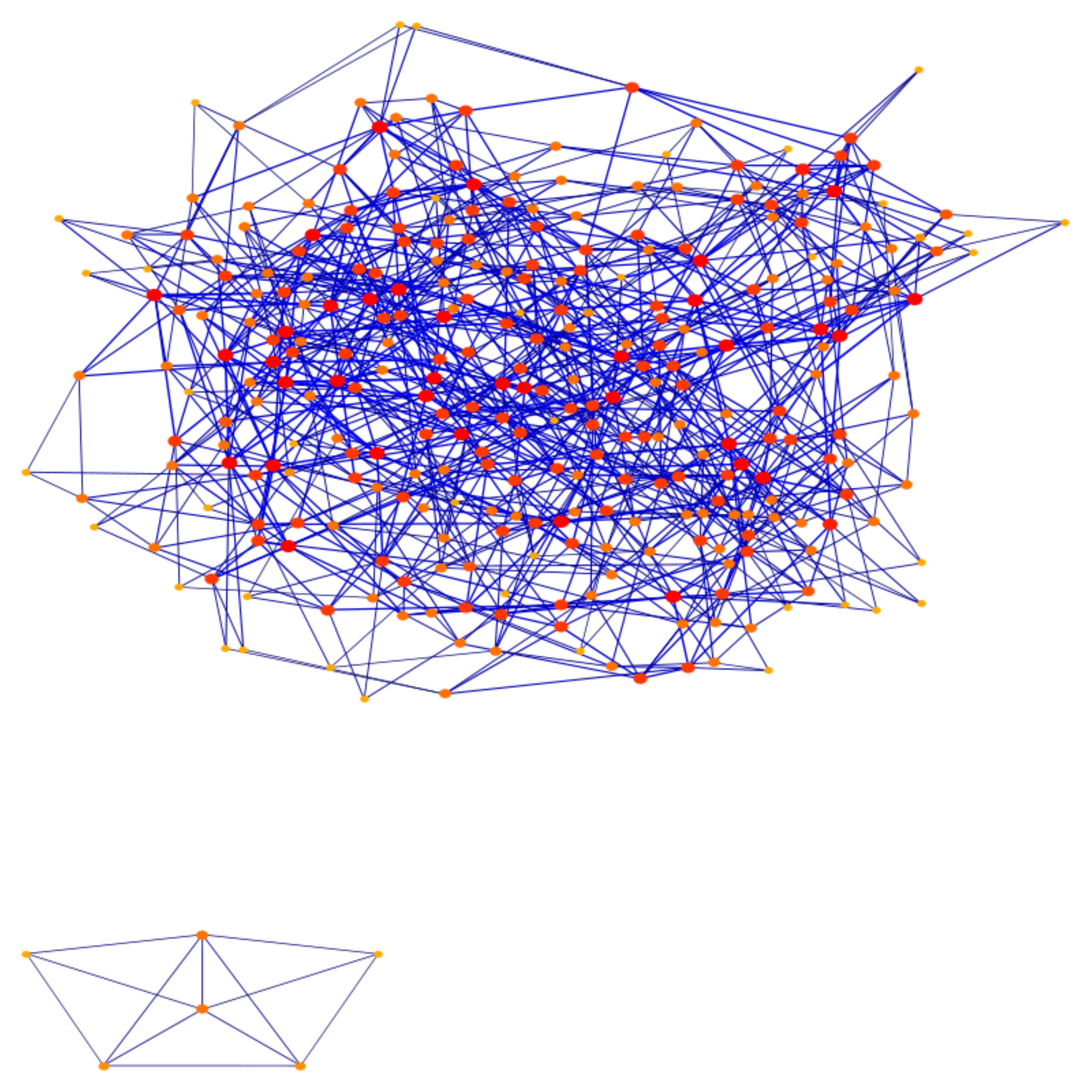}}
\caption{
The graph $(Z_{311},x^2+57,x^2+58,x^2+213)$ is a
quadratic graph with 3 generators over a prime field which is not connected. 
It consists of two different universes. One of them is small: 
it has a subgraph with vertices $\{ 77, 78, 233, 234, 79, 232 \; \}$. 
It is remarkable that this "diamond" is simultaneously invariant under $3$ 
different quadratic maps. We have not found an other one yet. Diamonds seem rare.
} \end{figure}

\question{What is the nature of these exceptions? Are there only finitely many? }

\begin{figure}[H]
\scalebox{0.13}{\includegraphics{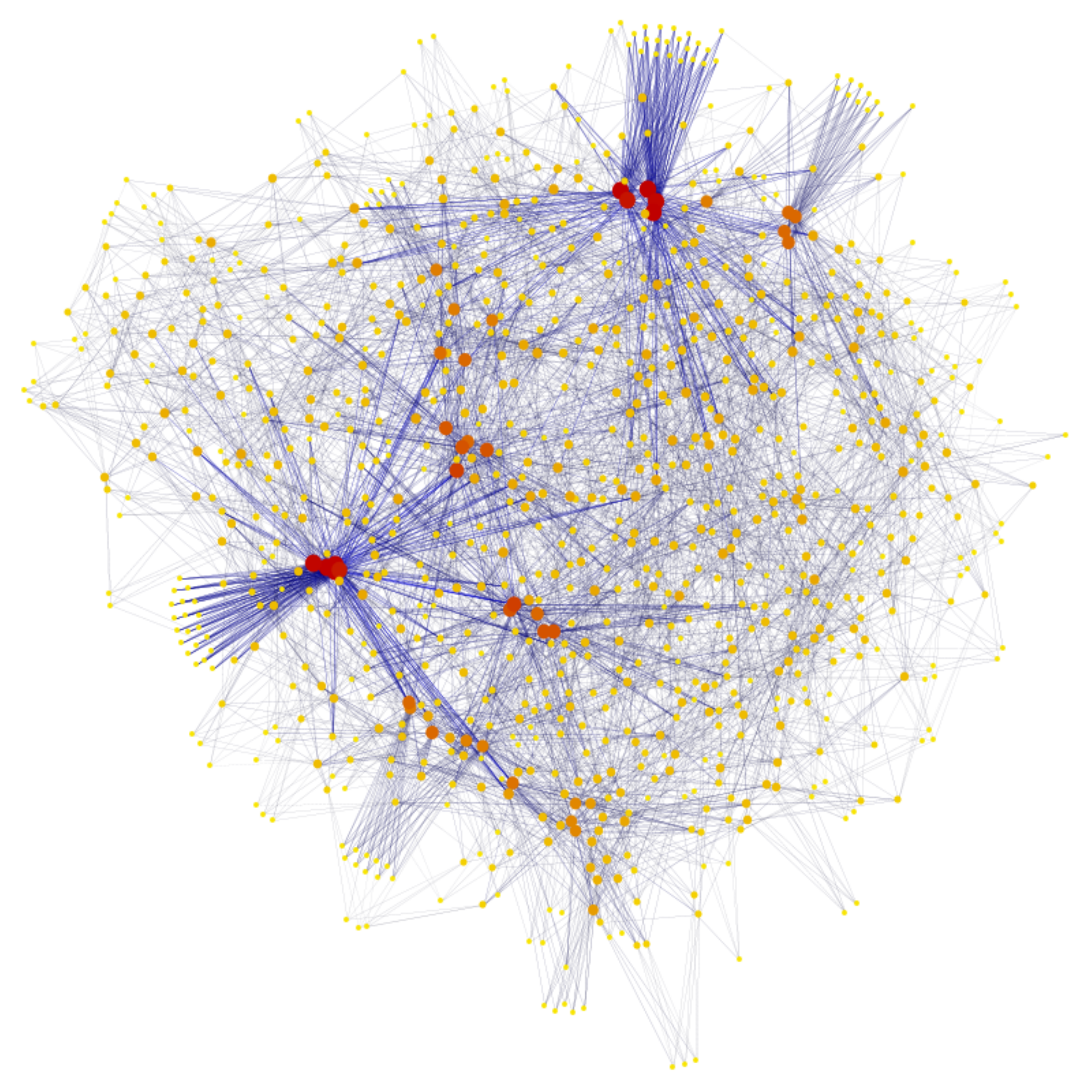}}
\scalebox{0.13}{\includegraphics{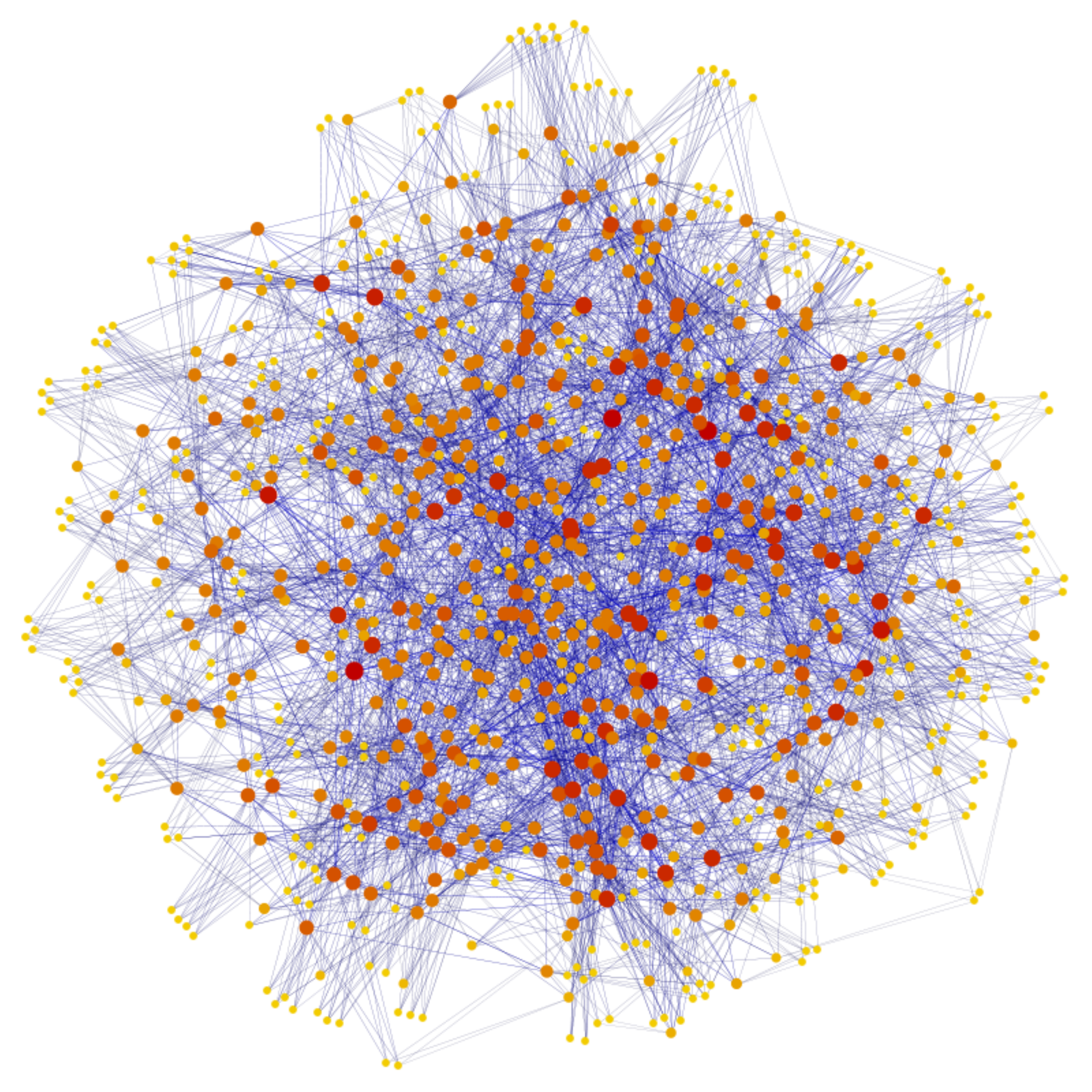}}
\scalebox{0.13}{\includegraphics{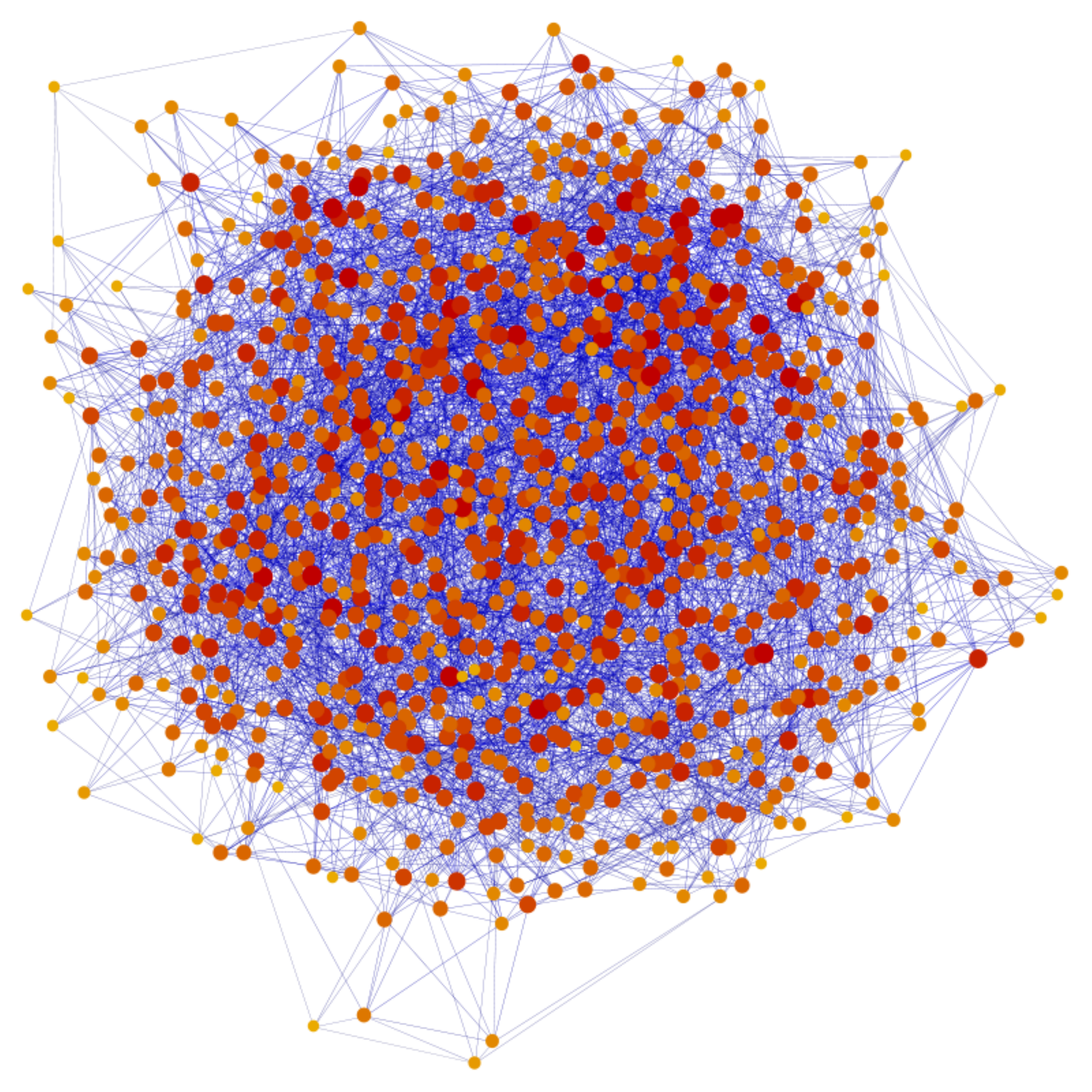}}
\caption{
The orbital network generated by $T_k(x)=x^2+k$ with $k=1,\dots,5$. 
on $Z_{1000}, Z_{1001}$ and $Z_{1009}$. For smooth numbers we see
typical rich club phenomena where a few nodes grab most of the attention
 and are highly connected. In the third case, where $n=1009$ is prime, 
the society is more uniform. 
} \end{figure}

\begin{figure}[H]
\scalebox{0.13}{\includegraphics{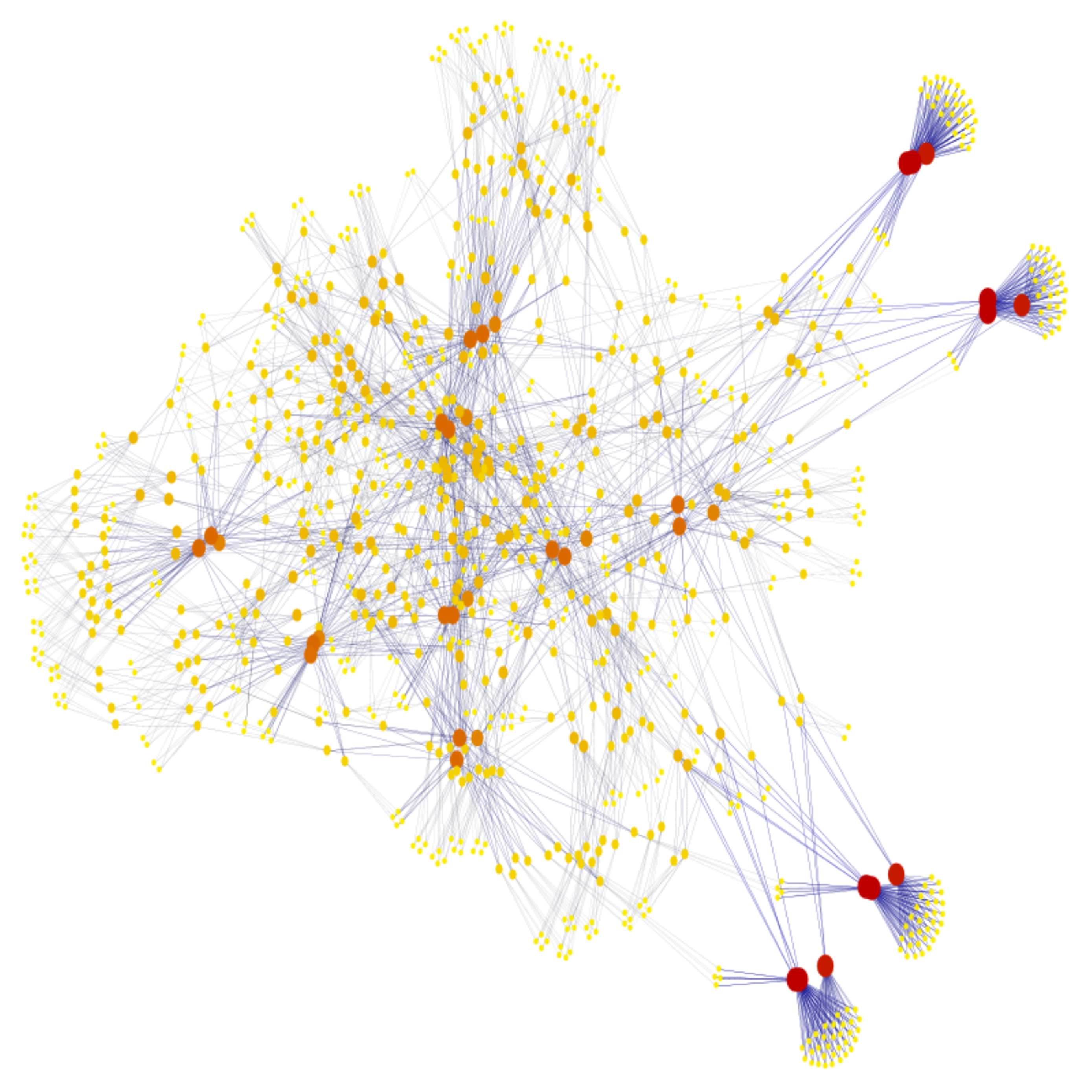}}
\scalebox{0.13}{\includegraphics{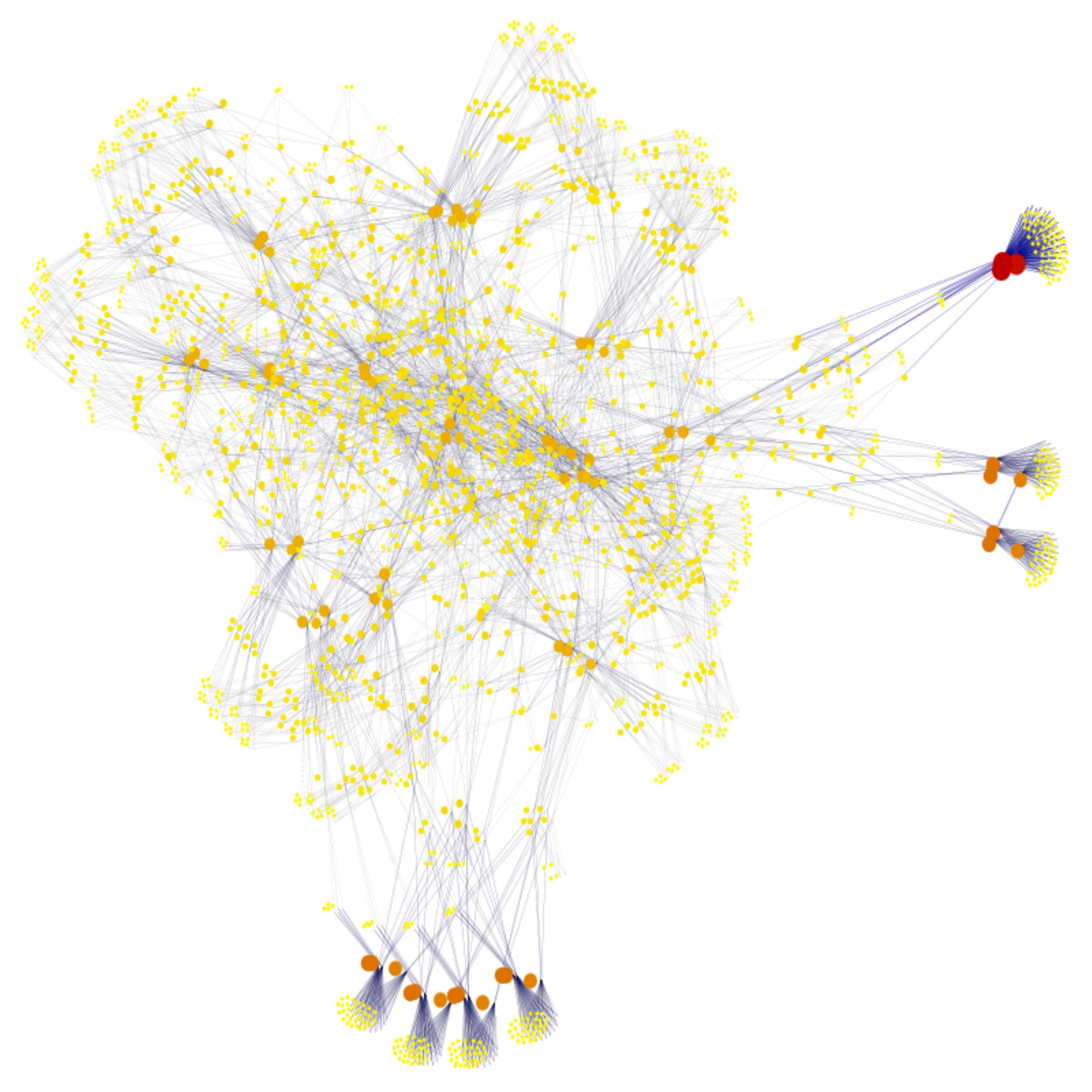}}
\scalebox{0.13}{\includegraphics{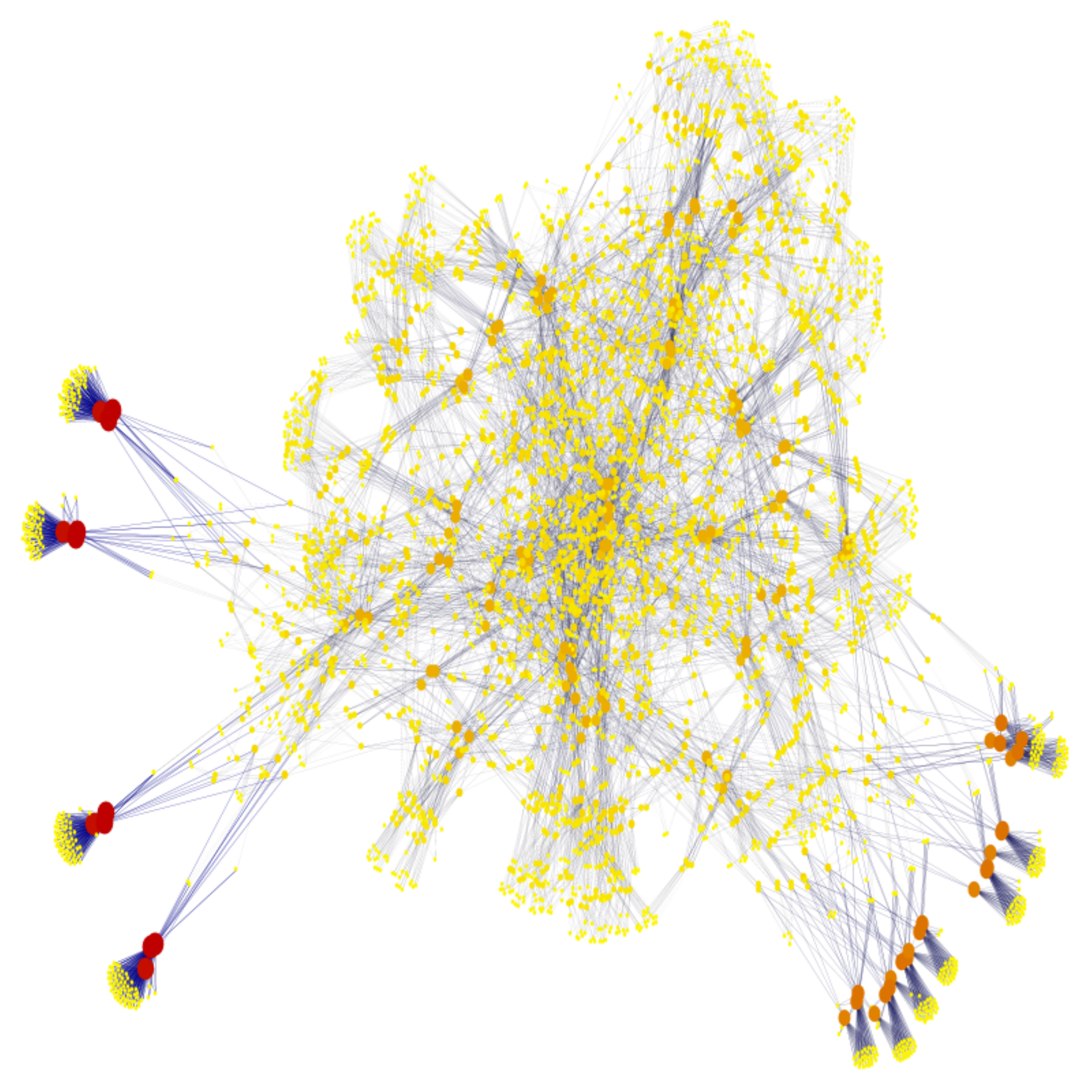}}
\caption{
The orbital network generated by three transformations 
$T_k(x)=x^2+k$ with $k=1,\dots,3$
on $Z_{2^{10}}, Z_{2^{11}}$ and $Z_{2^{12}}$. 
} \end{figure}

\section{The branch graph}

Given a vertex x, call 
$$ B(x) = \{ T^w x \; | \;  w \; {\rm word} \; {\rm in} \; T \; \}  $$
the {\bf branch} generated by $x$. It is the orbit of $x$ under the action of the monoid $T$. 
Speaking in physics terms, it is the {\bf future} of the vertex $x$ because invertibility built into the
monoid $T$ produces an {\bf arrow of time}. \\

Lets call the orbital graph {\bf 1-connected} if there exists 
$x \in R$ such that its branch is $R$. An orbital graph which is not connected needs several
{\bf light sources} to be illuminated completely.
Define a new graph $B(G)$ on R, where two points $x,y$ are connected, if their branches intersect. 
Lets call it {\bf k-connected}, if $k$ is the minimal number of branches reaching $R$. 

Lets call the orbital graph {\bf positively connected} if for every $x,y$, the branches
of $x$ and $y$ intersect.  A positively connected orbital graph is connected. Is the reverse true?

No, there are counter examples as seen in Figure~(\ref{branchcounterexample}). In this case there
are two {\bf universes} or {\bf communities} which are separated in the sense that one can not
get from one to the other by applying generators. While they are connected, each community is 
unreachable in the future from the other community. There is however a third community which 
reaches both. \\

Obviously $G$ is connected if and only if $B(G)$ is connected and by definition, 
$B(G)$ is the complete graph if $G$ is positively connected. 
Here is an orbital graph, where $B(G)$ is not the complete graph:

\begin{figure}[H]
\scalebox{0.2}{\includegraphics{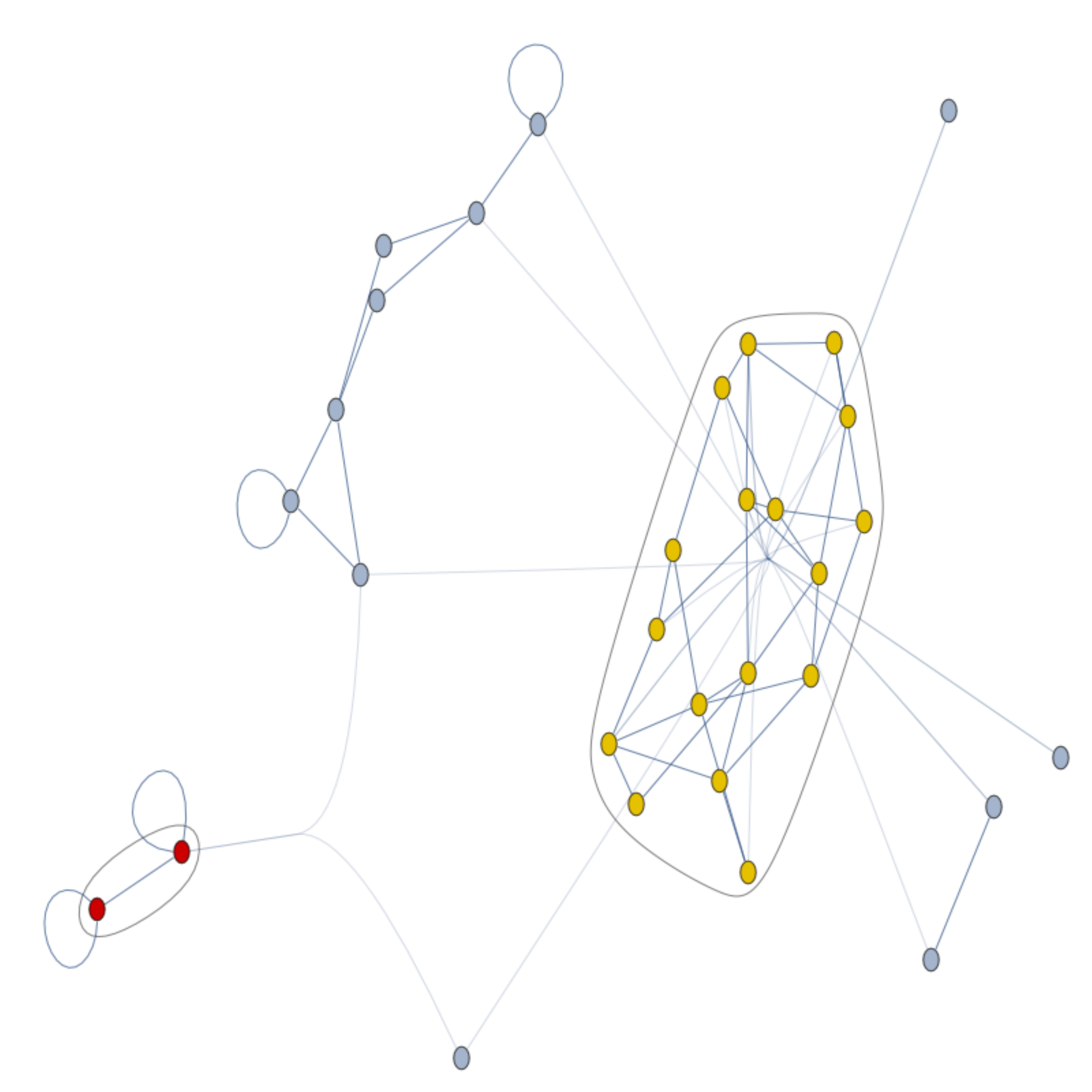}}
\caption{
The graph on $Z_p$ generated by $x^2+4$ and $x^2+7$. 
The branch of the point $x=1$ has length $17$ and is 
disjoint from the branch of the point $14$ which consists
only of two points $15,17$. While the graph is connected,
it is only 3-connected: we can not get out of the two
communities by applying the quadratic maps. This situation
is rather rare. In most cases, two different branches intersect. 
\label{branchcounterexample}
} \end{figure}

Of course, already disconnected graphs produces examples where all branches $B(x)$ are not
the entire graph. 

\section{Euler characteristic}

\begin{figure}[H]
\scalebox{0.2}{\includegraphics{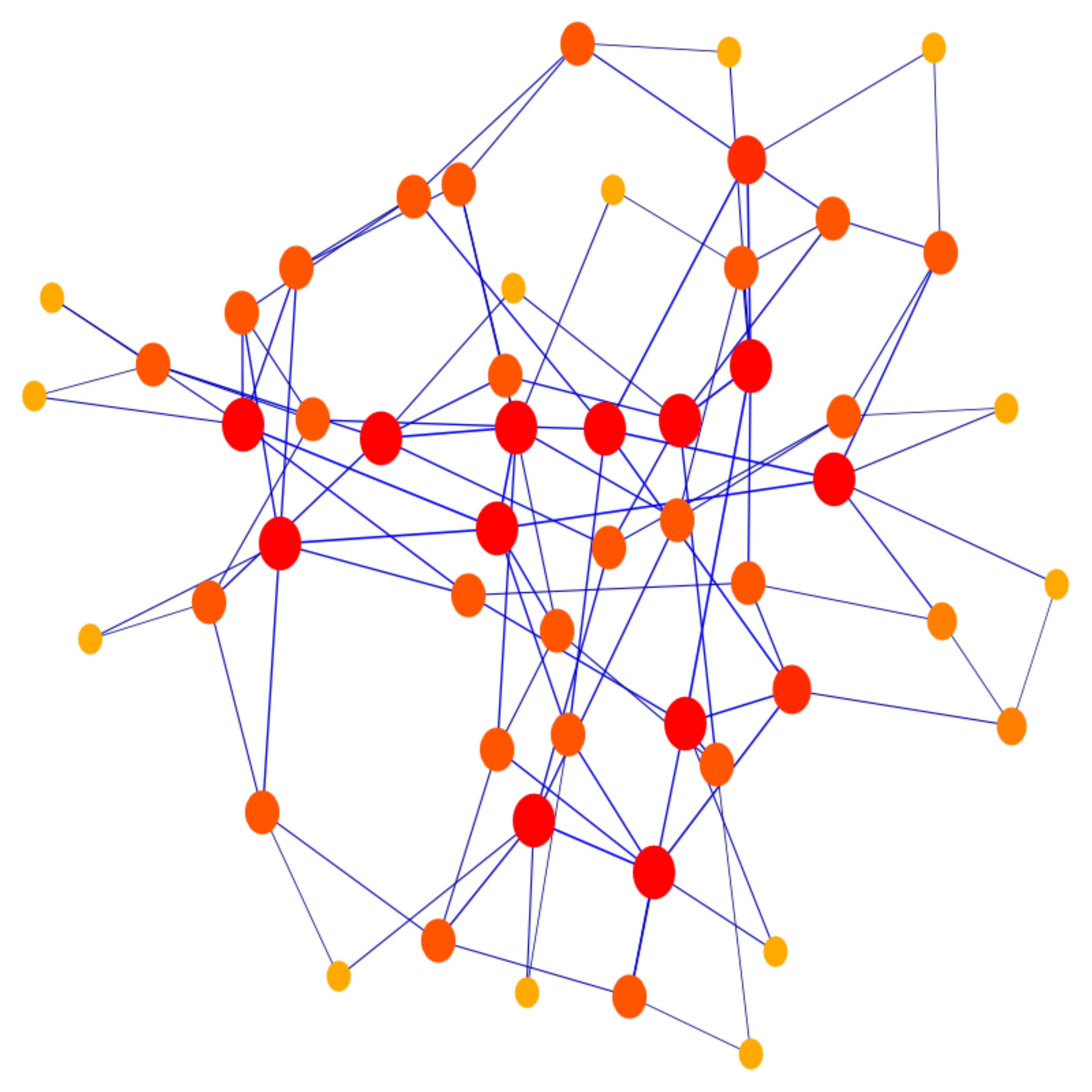}}
\caption{
The graph $(Z_{53},x^2+1,x^2+17)$ has minimal Euler characteristic $-52$ among all quadratic
orbital networks on $Z_{53}$ with two generators.
} \end{figure}

Given a graph $G$, denote by $c_k$ the number of complete subgraphs $K_{k+1}$. 
The number 
$$  \chi(G) = \sum_{k=0}^{\infty} (-1)^k c_k $$ 
is called the {\bf Euler characteristic} of $G$.  
Since in the case $d=1$, we have no tetrahedra, the Euler characteristic is in this case given by 
$$ \chi(G) = v-e+f \; ,  $$
where $v=|V|=c_0$ is the number of vertices, $e=|E|=c_1$ is the number of edges and 
$f=|F|=c_2$ is the number of triangles. We can also write by the Euler-Poincar\'e formula
$$ \chi(G) = b_0-b_1 \; ,  $$
where $b_i$ are the Betti numbers (for cohomology and a proof of Euler-Poincar\'e,
see e.g \cite{knillmckeansinger}).  \\

Here are some observations: 

\begin{lemma}
For any orbital network with $d=1$, the Euler characteristic is nonnegative. 
\end{lemma}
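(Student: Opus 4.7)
The plan is to reduce $\chi(G)=v-e+f$ (the expression already noted in the excerpt) to a sum of fixed points, $2$-cycles and $3$-cycles of $T$, each of which contributes nonnegatively. Throughout I use the fact that with $d=1$ every vertex $x$ has exactly one outgoing arrow $x\to T(x)$ in the associated functional digraph, so a set of $n$ vertices supplies only $n$ arrows internal to it.

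For the edge count, let $F$ denote the number of fixed points of $T$ and $C$ the number of vertices lying on a proper $2$-cycle (so $C$ is even). A fixed point contributes no edge of $G$. Each $2$-cycle is produced by both of its arrows and yields a single edge, while every remaining arrow contributes a distinct edge of $G$. Hence
$$ e = (p - F - C) + C/2 = p - F - C/2. $$

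For the triangle count I would classify all triangles $\{x,y,z\}$. Only the three arrows $T(x),T(y),T(z)$ can supply the three required edges, and a short case analysis (branching on whether some pair of these arrows forms a $2$-cycle) shows the only consistent configuration is an honest $3$-cycle $x\to y\to z\to x$ of $T$. Since distinct periodic orbits of $T$ are vertex-disjoint, if $P_3$ denotes the number of $3$-periodic vertices (a multiple of $3$) then $f=P_3/3$. Combining,
$$ \chi(G) = p - (p - F - C/2) + P_3/3 = F + C/2 + P_3/3 \geq 0. $$
The only step needing genuine care is the triangle classification, which forces the observed $3$-cycle structure; everything else is routine functional-graph bookkeeping. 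As a byproduct the proof shows that $\chi(G)$ equals the number of connected components of $G$ whose underlying cycle has length at most $3$, and in particular vanishes precisely when all periodic orbits of $T$ have length $\geq 4$.
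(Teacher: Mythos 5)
Your argument is correct, and it takes a genuinely different route from the paper. The paper invokes the Euler--Poincar\'e identity $\chi=b_0-b_1$ and argues homologically: each component of the functional digraph has exactly one attractor, so the number of independent cycles $b_1$ is at most the number of components $b_0$, whence $\chi\geq 0$. You instead do direct bookkeeping in the functional digraph: $e=p-F-C/2$, the triangle classification forcing every triangle to be a $3$-cycle of $T$, and hence the exact identity $\chi=F+C/2+P_3/3$. Your classification step is sound (the three edges of a triangle must be supplied by the three internal arrows, and a $2$-cycle or fixed point among them leaves too few distinct edges), and distinct periodic orbits are indeed disjoint, so $f=P_3/3$. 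What your approach buys is more than the paper's statement: an exact formula showing $\chi$ equals the number of components whose attracting cycle has length at most $3$, with no appeal to cohomology; the paper's argument is shorter but leans on the cited Euler--Poincar\'e formula and is stated somewhat loosely (the number of attractors is only an upper bound for $b_1$, since attractors of length $\leq 3$ contribute nothing to $b_1$). One small remark: you quote $\chi=v-e+f$ from the paper, which rests on the absence of $K_4$; your own opening observation already delivers this, since a $K_4$ would need $6$ edges but its $4$ vertices supply only $4$ internal arrows, so you could make the proof entirely self-contained.
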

\begin{proof}
The reason is that the graph comes from a directed graph which always has 
maximally one outgoing edge at every vertex. Every forward orbit $T^n(x)$
ends up at a unique attractor. The number of attractors is $b_1$. The number
of components of the graph $b_0$ is clearly larger or equal than $b_1$. 
\end{proof} 

We observe that minimal Euler characteristic is constant $0$ for $p>7$
and networks $T(x)=x^2+a$. 

\begin{lemma}
For $d=2$ and $p$ large enough, the Euler characteristic is always negative. 
\end{lemma}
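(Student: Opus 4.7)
The plan is to show that $\chi(G) \le -p + O(1)$, where the implicit constant depends only on $d=2$, so that $\chi(G) < 0$ for all sufficiently large primes $p$. Writing $\chi(G) = c_0 - c_1 + c_2 - c_3 + \cdots$ with $c_k$ the number of $K_{k+1}$-subgraphs, it suffices to prove $c_0 = p$, $c_1 \ge 2p - O(1)$, and $c_k = O(1)$ for every $k \ge 2$. The first is immediate, and the bulk of the work is concentrated in the other two.

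The crucial lower bound $c_1 \ge 2p - O(1)$ comes from counting the $2p$ directed arrows $(x, T_i(x))$ for $x \in \mathbb{Z}_p$ and $i \in \{1,2\}$. Since $a_1 \ne a_2$, we have $T_1(x) \ne T_2(x)$ for every $x$, so no two arrows out of the same vertex collapse. Self-loops occur when $T_i(x) = x$, i.e.\ $x^2 - x + a_i = 0$, contributing at most $2$ per $i$ and $4$ in total. \emph{Reciprocal} arrows, where $y = T_i(x)$ and $x = T_j(y)$ with $x \ne y$, correspond to roots of the polynomial $T_j \circ T_i(x) - x$ of degree $4$ in $x$; each of the four ordered pairs $(i,j)$ contributes at most $4$ such roots, giving $O(1)$ collapsed edges. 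Subtracting these from the $2p$ arrows yields $c_1 \ge 2p - O(1)$.

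For the higher cliques, first observe that each vertex has out-degree at most $2$, and in-degree at most $4$ since $T_i(y) = x$ forces $y^2 = x - a_i$, which has at most $2$ solutions. Hence the maximum degree is $6$, forcing $c_k = 0$ for $k \ge 6$. For $2 \le k \le 5$, a $K_{k+1}$ can be encoded by a root vertex $x_0$ and a rooted spanning tree of the clique labelled by generators $i_1, \dots, i_k \in \{1,2\}$; the other clique vertices are then iterated polynomial expressions in $x_0$ of degree at most $2^k$, and the remaining adjacencies translate into polynomial equations in $x_0$ whose degrees and coefficients are bounded in terms of $k$ alone. Because the $T_i$ are monic quadratics with distinct $\mathbb{Z}$-coefficients $a_1, a_2$, their iterates remain monic of the expected degree modulo every sufficiently large $p$, so Bezout yields $O(1)$ roots per equation and hence $c_k = O(1)$.

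Putting the pieces together, $\chi(G) \le p - (2p - O(1)) + O(1) = -p + O(1)$, which is negative once $p$ is sufficiently large. The main obstacle is step 3: one must check that the polynomial equations encoding higher-clique membership are not identically zero modulo $p$, so that the Bezout bound is uniform in $p$. This is the only place where the size of $p$ matters, and it is handled by noting that the defining polynomials arise from monic polynomials over $\mathbb{Z}$, whose leading coefficient is $1$ and whose degree is therefore preserved modulo every prime. Everything else is a bounded combinatorial enumeration over generator labels and clique structures for $k \le 5$.
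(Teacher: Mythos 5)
Your proposal is essentially the paper's own argument, only written out in more detail: $c_0=p$, the edge count is $2p-O(1)$ after discarding the boundedly many solutions of the degree-reducing Diophantine equations (loops, coincident images, reciprocal arrows), and triangles and higher cliques are $O(1)$ because they force polynomial equations of bounded degree, giving $\chi\le p-2p+O(1)<0$ for large $p$. Two small soft spots, neither addressed in the paper's even terser proof and both fixable: maximal degree $6$ only rules out $K_8$ and beyond (excluding $K_7$ needs the out-degree/edge count, or just include $k=6$ in the Bezout bound), and the monicity argument does not by itself handle adjacency equations $T_w(x_0)=T_{w'}(x_0)$ with words of equal length, where the monic leading terms cancel and one must check (e.g.\ by comparing the next coefficients, using $a_1\neq a_2$ and $p$ odd, or by always selecting a constraint between the root and a depth-$\ge 2$ vertex) that the resulting polynomial is not identically zero.
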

\begin{proof}
There are only finitely many solutions $C_1$ of Diophantine equations which 
reduce the average degree and there are only finitely many solutions $C_2$ of
Diophantine equations which produce triangles. The Euler characteristic 
is now bounded above by $p-2p+C_1 + C_2$ which is negative for large enough $p$. 
\end{proof} 

Remark. We see that for $p>13$ the Euler characteristic is always negative. \\

For $d=2$, the minimum is always very close to $-p$. For $p>23$ we see already 
that the minimal Euler characteristic is always either $1-p$ or $2-p$.
The maximum is $0$ for $p=17$ and becomes negative afterwards. For $p=29$ for
example, it is $-11$. We see also the difference between the maximum and minimum 
seem to settle pretty much.  The minimum is $-p$ or $-p+1$ and the maximum 
between $23-p$ or $25-p$. \\

\begin{figure}[H]
\scalebox{0.2}{\includegraphics{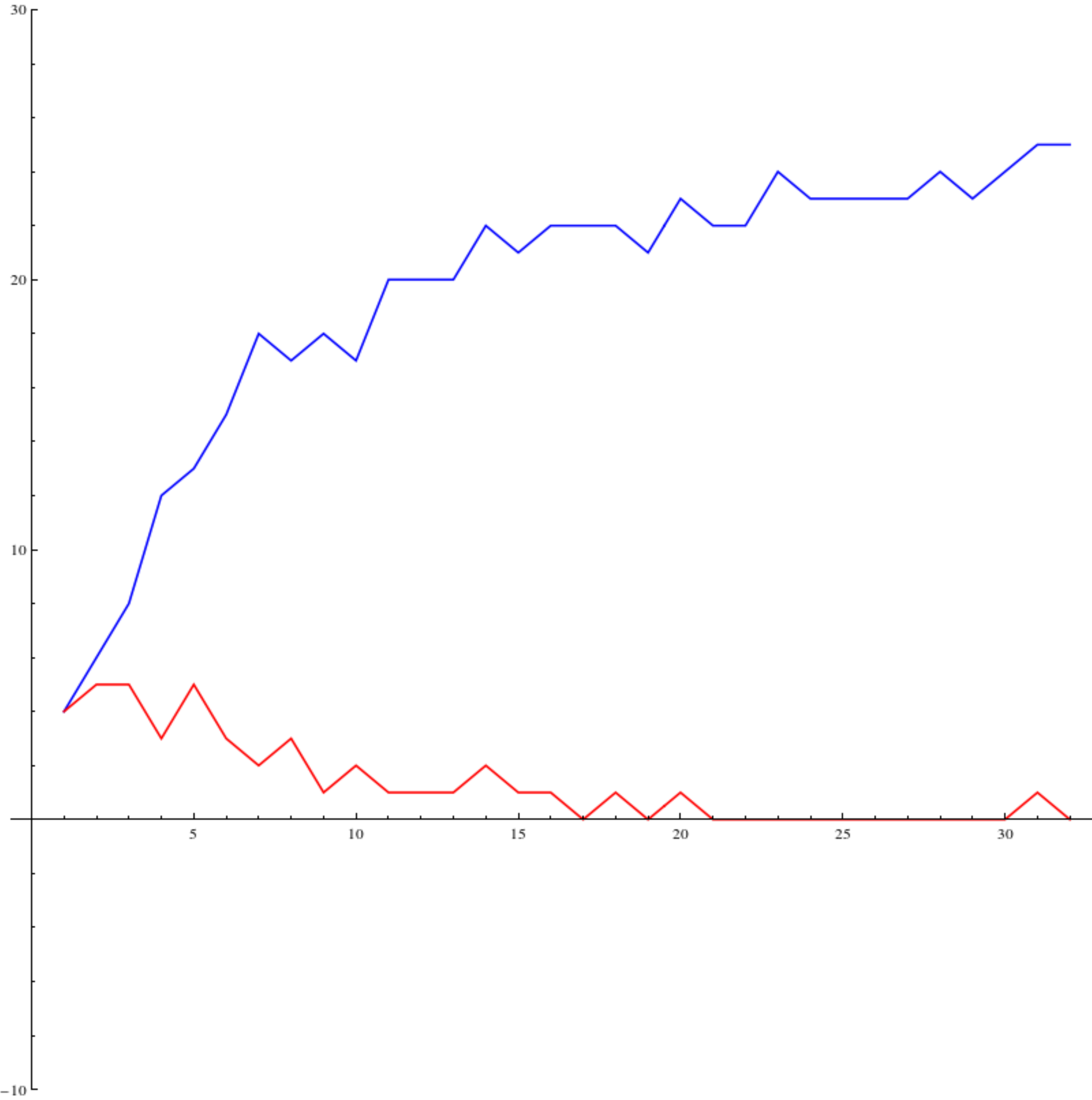}}
\caption{
We see the minimal and maximal Euler characteristic for all quadratic orbital graphs
with two generators on $Z_p$ as a function of primes $p$ up to $p=137$. The figure shows
$\chi_{\rm min}(G(Z_{p}))+p$ and $\chi_{\rm max}(G(Z_{p}))+p$ which prompts the question
whether these numbers will settle eventually. 
} \end{figure}

\begin{lemma}
If a graph has no triangles and uniform degree $4$ then the Euler 
characteristic is $-p$. 
\end{lemma}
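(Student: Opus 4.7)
The plan is to use the definition of Euler characteristic directly, exploiting both hypotheses in turn to eliminate higher-dimensional simplices and to pin down the edge count.

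First I would note that the absence of triangles immediately forces $c_k=0$ for every $k\geq 2$: any $K_{k+1}$ with $k\geq 2$ contains a triangle as a subgraph, so its nonexistence trivially rules out all higher complete subgraphs. Consequently the infinite alternating sum defining $\chi(G)$ collapses to the two-term expression $\chi(G)=c_0-c_1=v-e$.

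Next I would compute $v$ and $e$ from the remaining hypothesis. Since the vertex set is $V=Z_p$, we have $v=p$. The uniform degree condition together with the handshaking identity $\sum_{x\in V}\deg(x)=2e$ gives $4p=2e$, so $e=2p$. Substituting into $\chi(G)=v-e$ yields $\chi(G)=p-2p=-p$, as claimed.

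There is no real obstacle here; the only thing to be careful about is that the lemma is implicitly a statement about orbital networks on $Z_p$ (so that $v=p$), and that ``uniform degree $4$'' refers to the undirected simple graph $G$, not to the directed structure coming from the generators. Once these are understood, the proof is a one-line application of handshaking together with the observation that triangle-freeness truncates the clique-counting sum at dimension one.
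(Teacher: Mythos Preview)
Your proof is correct and follows essentially the same route as the paper: the handshaking lemma gives $e=2p$, triangle-freeness truncates the Euler characteristic to $v-e$, and subtraction yields $-p$. The only difference is cosmetic---you spell out explicitly that the absence of triangles kills all higher $c_k$, which the paper leaves implicit.
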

\begin{proof}
The handshaking lemma telling that the average degree is $2 |E|/|V|$
which means that $|E|=2|V|$. Because there are no triangles, we know
that $\chi(G) =|V|-|E| = -|V|=-p$. 
\end{proof} 

The uniform degree and no triangle case is "generic" in the sense that
it happens if there are no solutions to Diophantine equations like 
$T^2=T, T^3=Id, T^2=Id$. The question is whether we always can find $a$
which avoids these cases. This leads to the question: 

\question{Is the minimal Euler characteristic equal to  $-p$ for large $p$?}

\question{Is the difference between minimal and maximal Euler characteristic constant for large $p$? }

The minimal Euler characteristic has led for almost all larger $p$ cases to regular graphs. 
They are achieved if degree reducing Diophantine equations like 
$T(S(x))=x$,$S(T(x))=x$,$T^2(x)=x$,$T(x)=S(x)$,$S^2(x)=x$ have no solutions
and triangle producing Diophantine equations like $TST(x)=x$,$STS(x)=x$
$S^2T(x)=x$,$T^2S(x)=x$,$T S^2(x)=x$,$ST^2(x)=x$,$S(x)=T^2(x)$,$T(x)=S^2(x)$ 
have no solutions. These are all higher degree congruences. Heuristically, if 
each of these $K$ equations have no solution with probability $1/2$, then we
expect for $p^2>2^(-K)$ to have cases where none has a solution. In any case, we
expect for large enough $p$ to have cases of regular graphs 
with Euler characteristic $-p$. As always with Diophantine equations, this could
be difficult to settle. 

\section{The number of cliques}

For $d=1$ we see no tetrahedra but in general 2 triangles. This can be proven: 

\begin{lemma}
For $d=1$, there can not be more than 2 triangles.
\end{lemma}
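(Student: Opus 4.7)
The plan is to show that triangles in the $d=1$ orbital graph are in bijection with $3$-cycles of the dynamical system $T(x)=x^2+a$, and then to bound the number of such $3$-cycles by a polynomial degree argument.

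First I would argue that any triangle on vertices $\{x,y,z\}$ forces $T$ to act as a $3$-cycle on these three vertices. Every edge of the orbital graph has the form $\{u,T(u)\}$, so each of the three triangle edges is contributed by one of its endpoints via $T$. Since each vertex contributes at most one such edge (and none at all if $T(u)=u$), the three triangle edges must be contributed by $x$, $y$, $z$ respectively. Thus $T(x),T(y),T(z)\in\{x,y,z\}$, none of these is a fixed point, and the three resulting unordered pairs are pairwise distinct. These constraints force $T$ restricted to $\{x,y,z\}$ to be a fixed-point-free permutation of a $3$-element set, i.e., a $3$-cycle.

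Next I would convert the count of such $3$-cycles into a polynomial root count. Each $3$-cycle of $T$ consists of exactly three strict period-$3$ points, i.e., elements $x$ with $T^3(x)=x$ but $T(x)\neq x$. The polynomial $T^3(x)-x$ has degree $2^3=8$ over $Z_p$, and $T(x)-x=x^2-x+a$ divides it: working modulo $x^2-x+a$ one has $T(x)\equiv x$ and hence $T^3(x)\equiv x$. So $T^3(x)-x=(x^2-x+a)\,Q(x)$ with $\deg Q=6$, and every strict period-$3$ point, not being a fixed point of $T$ and hence not a root of $x^2-x+a$, must be a root of $Q$. Consequently $Z_p$ contains at most $6$ strict period-$3$ points, yielding at most $6/3=2$ three-cycles, i.e., at most $2$ triangles.

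The only genuinely conceptual step is the triangle-to-$3$-cycle correspondence, which uses the hypothesis $d=1$ in an essential way: it is precisely because there is only one generator that each edge has a unique source, making the ``vertex contributes its outgoing edge'' bookkeeping unambiguous. The rest is elementary polynomial arithmetic. I do not anticipate a significant obstacle; the subtlest point is simply verifying that no strict period-$3$ point is lost in the factor $x^2-x+a$, which is immediate from the definition of ``strict.''
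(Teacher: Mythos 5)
Your proof is correct and follows essentially the same route as the paper: identify triangles with $3$-cycles of $T$ and bound these by the degree of $T^3(x)-x$. Your version is slightly sharper in bookkeeping (factoring out $x^2-x+a$ to get at most $6$ strict period-$3$ points rather than just $8$ solutions of $T^3(x)=x$) and spells out the triangle-to-$3$-cycle correspondence that the paper only asserts, but the underlying argument is the same.
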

\begin{proof}
Triangles are solutions to the Diophantine equation $T^3(x)=x$ without satisfying 
the Diophantine equation $T^2(x)=T(x))$ or $T^2(x)=x$.  For $T(x)=x^2+a$ we have 
maximally $8$ solutions to 
$$  T^3(x) - x = a^4+4 a^3 x^2+2 a^3+6 a^2 x^4+4 a^2 x^2+a^2+4 a x^6+2 a x^4+a+x^8 - x  \;  $$
and
$$  T^2(x)-T(x) = x^4+5x^2+9  \; . $$
There are maximally $8$ solutions to the first equation. This means that we have either $1$
or $2$ triangles. Because triangles form cycles of $T$ they can not be adjacent but have to 
consist of disjoint vertices. 
\end{proof} 

{\bf Remarks}. \\
{\bf 1)} With primes $p$ larger than $19$ we so far always have found
a map $T(x)=x^2+a$ on $Z_p$ for which we have $2$ triangles. \\
{\bf 2)} For cubic $x^3+a$ have less or equal than  $3^3/3=9$ triangles.
For $p=53$ and $a=0$ we have $8$ triangles. All other cases with $T(x)=x^3+a$
on $Z_p$ we have seen has less or equal than $2$ triangles. The case $p=53$
is special because $x^{27}-x$ has $27$ solutions modulo $53$ 
by {\bf Fermat's little theorem}. 

\begin{lemma}
For $d=1$ and prime $p$, there are no $K_4$ graphs. 
\end{lemma}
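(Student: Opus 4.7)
The plan is to exploit the functional-graph structure present when $d=1$. Since only one generator $T(x)=x^2+a$ is at play, the directed version of $G$ is the functional graph of $T$ on $Z_p$: every vertex $x$ has exactly one outgoing arrow, namely to $T(x)$. The undirected graph $G$ is obtained by symmetrizing these arrows and discarding the self-loop at any fixed point of $T$; in particular, two vertices $u,v$ are adjacent in $G$ if and only if one of them is the $T$-image of the other.

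Now take any four vertices $S=\{x_1,x_2,x_3,x_4\}$ and bound the number of edges of the induced subgraph $G[S]$. Every edge of $G[S]$ is of the form $\{x_i,T(x_i)\}$ with $T(x_i)\in S$, so each such edge is witnessed by the outgoing arrow of at least one vertex of $S$. Since each $x_i$ contributes at most one such arrow, we obtain $|E(G[S])|\leq 4$. But $K_4$ has $\binom{4}{2}=6$ edges, strictly more than $4$, and therefore $G[S]$ cannot be a $K_4$.

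I do not anticipate a real obstacle: the argument is essentially a one-line out-degree count and, unlike the preceding triangle lemma, it makes no use of the specific polynomial form of $T$ or of the primality of $p$ (so the statement should in fact hold verbatim for any $d=1$ orbital network, not just the prime case). The only subtlety worth noting is that two outgoing arrows $T(x_i)=x_j$ and $T(x_j)=x_i$ may coalesce into a single undirected edge, but such coincidences only decrease $|E(G[S])|$ and therefore reinforce the bound rather than threaten it.
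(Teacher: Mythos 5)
Your argument is correct, and it is not the route the paper's stated proof takes: the paper derives the lemma in one line from the preceding triangle count ("a tetrahedral subgraph would have $4$ triangles," while the earlier lemma allows at most $2$), which ultimately rests on the fact that $T^3(x)=x$ is a degree-$8$ polynomial equation over the field $Z_p$ and hence uses both the quadratic form of $T$ and the primality of $p$. Your out-degree count is instead essentially the paper's own follow-up remark, where a "second proof which works for higher order polynomials too" is sketched by comparing outgoing arrows with the edges of the tetrahedron; your version is actually cleaner than that remark (whose count $4=\sum_x V_+(x)=\sum_x V_-(x)=8$ is garbled), since you simply observe that each of the four vertices supplies at most one outgoing arrow inside the induced subgraph, giving at most $4$ edges against the $6$ required by $K_4$. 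What your approach buys is generality: it needs neither primality nor the polynomial nature of $T$, so it excludes $K_4$ for any single-generator orbital network, and it scales directly to $d$ generators (at most $d\,m$ edges on $m$ vertices, so no $K_{m+1}$ once $\binom{m+1}{2}>d(m+1)$), which is exactly the counting behind the paper's later lemma excluding large cliques for several generators. What the paper's main proof buys, by contrast, is a sharper structural fact along the way — the bound of at most two triangles, which the pure degree count cannot see.
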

\begin{proof}
A tetrahedral subgraph would have $4$ triangles.
\end{proof}

{\bf Remark.}
Here is a second proof which works for higher order polynomials too: 
look at the directed graph with edges $(x,y)$ if $T(x)=y$. 
The degree of each edge can not be larger than $3$ and we 
have maximally one outgoing edge at each point.
Lets look at a tetrahedron and ignore connections to it.
Let $V_{\pm}(x)$ the number of out and incoming directions at 
a node $x$. We must have $4=\sum_x V_+(x) = \sum_x V_-(x)=8$,
a contradiction showing that the tetrahedron is not possible.  \\

For two generators $d=2$, the questions become harder. The maximal number of triangles
we have seen is $18$, which happens for $p=67$. \\

\begin{lemma}
There are no $K_6$ subgraphs for $d=3$. 
\end{lemma}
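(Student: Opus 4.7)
Plan: Suppose for contradiction that $G$ contains a $K_6$ with vertex set $V=\{v_1,\ldots,v_6\}$. Because each generator $T_k(x)=x^2+a_k$ has a single image and the three images $v^2+a_1,v^2+a_2,v^2+a_3$ are distinct (the $a_k$ being pairwise distinct), every vertex has out-degree at most $d=3$ in the directed graph. Hence the set $D$ of directed edges inside $V$ satisfies $|D|\le 6\cdot 3=18$, while each of the $\binom{6}{2}=15$ edges of $K_6$ requires at least one directed realization, so $|D|\ge 15$. In particular at least three vertices of $V$ have out-degree exactly three within $V$, meaning $\{v^2+a_1,v^2+a_2,v^2+a_3\}\subseteq V$ for each such $v$.

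The main case to rule out is $|D|=18$, where every vertex $v\in V$ satisfies $T_k(v)\in V$ for all $k$, i.e.\ $T_k(V)\subseteq V$. Setting $B=\{v^2:v\in V\}$ and letting $c$ be the number of antipodal pairs $\{v,-v\}\subseteq V$, we have $|B|=6-c$ and $B+a_k\subseteq V$ for $k=1,2,3$. If $c=0$ then $|B+a_k|=6=|V|$ forces $B+a_k=V$ for each $k$, so $V=V+(a_i-a_j)$ for $i\ne j$; as $p$ is prime and $a_i-a_j\ne 0$, this gives $V=\mathbb{Z}_p$, contradicting $|V|=6<p$.

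When $c\ge 1$ I would invoke the following easy lemma: if $C\subseteq \mathbb{Z}_p$ and $|C\triangle(C+\gamma)|=2$, then $C$ is an arithmetic progression with common difference $\gamma$ (proved by counting $01/10$ transitions along the single $\gamma$-cycle in $\mathbb{Z}_p$). Since $(B+a_i)$ and $(B+a_j)$ are proper subsets of $V$ differing by the shift $a_i-a_j$, comparing three such shifted subsets forces $B+a_k$ to be an arithmetic progression with two distinct, non-opposite, common differences among the $a_i-a_j$, which is impossible in $\mathbb{Z}_p$ for $p$ beyond a small bound. A short sub-case analysis in $c\in\{1,2,3\}$ completes this case.

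The remaining case $|D|<18$ has some vertex $v$ with out-degree at most $2$ in $V$; since the undirected degree of $v$ in $K_6$ is $5$, this forces at least three in-neighbors of $v$ within $V$. Because the in-neighbors of $v$ coming from a single generator $T_k$ are the pair $\{\pm\sqrt{v-a_k}\}$, such in-neighbors inject antipodal pairs into $V$, and the structural analysis reduces to the shift-invariance arguments above. The main obstacle is the bookkeeping in the $c\ge 1$ sub-cases, and the unavoidable direct verification of a handful of small primes where the AP-uniqueness step degenerates (only $p=7,11,13$ need explicit treatment, since $K_6$ forces $6\le p$).
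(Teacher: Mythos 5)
Your opening count is correct ($15\le |D|\le 18$, so at least three vertices have all three images inside $V$), but what follows is a plan rather than a proof: the two places where all the work lies are asserted, not carried out. The most serious gap is the case $|D|<18$. Your entire mechanism --- the sets $B+a_k$, the shift comparisons, the progression lemma --- rests on the inclusion $T_k(V)\subseteq V$ for every $k$, which you only have when $|D|=18$. When some vertex has out-degree at most $2$ inside $V$, no translate of $B$ need be contained in $V$, and the sentence claiming that ``the structural analysis reduces to the shift-invariance arguments above'' has no argument behind it. The supporting claim is also wrong as stated: if $T_k(u)=v$ with $u,v\in V$, only $u$, not $-u$, is forced to lie in $V$, so in-neighbours do not inject antipodal pairs into $V$, and nothing in this case hands you the translate structure you use. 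Since $15\le |D|\le 17$ is the generic situation for a hypothetical $K_6$, this is not a leftover corner case but the heart of the matter.

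Even inside $|D|=18$ the argument is complete only for $c=0$ (that subcase is correct: $B+a_i=V=B+a_j$ forces $V+(a_i-a_j)=V$, impossible for a proper nonempty subset of $\Z_p$). For $c\ge 1$ the bound is $|(B+a_i)\triangle (B+a_j)|\le 2c$, so your transition-counting lemma, which needs the symmetric difference to equal exactly $2$, applies only to $c=1$; for $c\in\{2,3\}$ the promised ``short sub-case analysis'' is absent and not obviously short (for $c=3$ you must exclude every configuration with $B+\{a_1,a_2,a_3\}\subseteq V$, $|B|=3$, $|V|=6$, where Cauchy--Davenport still leaves room, so the relations $B=\{v^2: v\in V\}$ and $V=-V$ would have to be exploited in earnest). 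The $c=1$ case also quietly uses that a $5$-term arithmetic progression in $\Z_p$ determines its common difference up to sign; that is true for large $p$ but needs a proof and an explicit bound, and the claim that only $p=7,11,13$ require separate checking is not justified. For comparison, the paper disposes of the lemma with the directed-graph degree count alone (it asserts only two outgoing edges per vertex); your own computation $6\cdot 3=18\ge 15$ shows that for $d=3$ the naive count is not by itself decisive, so the finer additive analysis you attempt is genuinely called for --- but as written it does not yet close the argument.
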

\begin{proof} 
Look at such a subgraph and the directed graph. This 
is not possible because we have only 2 outgoing edges
at each vertex. 
\end{proof}

\question{Can there be $K_5$ subgraphs in any of the spaces $X_p^d$? }

We have checked for all $p \leq 47$. If the answer is $no$, and
for $d=2$ there are no graphs $K_5$ embedded, then the Euler characteristic
is $v-e+f-t$ where $v$ is the number of vertices, $e$ the number of edges and $f$ the number of 
triangles and $t$ the number of tetrahedra.  \\

\question{What is the maximal number of $K_4$ graphs for $d=2$?}

It is very small in general. We have never seen more than $2$ 
(which was the case for $p=19)$.

\section{Diameter}

The minimal diameter among all graphs on $X_p^2$ seems to be monotone in $p$. 
$3$ is the largest with diameter $1$, $7$ the largest with minimal diameter $2$
$13$ the largest with diameter $3$, $31$ the largest minimal diameter $4$. 
$61$ the largest with diameter $5$, $127$ the largest with minimal diameter $6$.
For $241$ the largest diameter is $7$. For $p=251$ it is already $8$ even so $251<2^8$. \\

While we see in experiments that diameter of a network in $X_p^2$ can not be smaller than $\log_2(p)$
we can only show a weaker result: 

\begin{lemma}
The diameter in $X_p^2$ can not be smaller than $\log_6(p)$. 
\end{lemma}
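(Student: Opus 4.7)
The plan is to combine a uniform vertex-degree bound with a standard ball-counting argument. First I would bound the degree of an arbitrary vertex $v \in Z_p$ in the orbital graph $G$. Every neighbor of $v$ is either a forward image $T_1(v) = v^2 + a_1$ or $T_2(v) = v^2 + a_2$ (contributing at most two neighbors), or a preimage lying in $T_i^{-1}(v) = \{ y \in Z_p : y^2 = v - a_i \}$ for $i = 1,2$. Since $Z_p$ is a field, each equation $y^2 = v - a_i$ has at most two solutions, so preimages contribute at most four further neighbors. Hence every vertex satisfies $\deg_G(v) \le 6$.

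Next I would use this degree bound to control the size of balls. By induction on $k$, every vertex at graph distance exactly $k$ from $v$ is a neighbor of some vertex at distance $k-1$, so
\[ |B(v,k)| \;\le\; 1 + 6 + 6^2 + \cdots + 6^k \;<\; 6^{\,k+1}. \]
Finally, suppose $G \in X_p^2$ has finite diameter $D$. Then the connected component of any $v$ is contained in $B(v,D)$, and in particular, when the graph is connected, $p \le |B(v,D)| < 6^{\,D+1}$, which rearranges to $D \ge \log_6 p$ (absorbing the unit additive discrepancy into the rounding convention for the non-integer quantity $\log_6 p$). If $G$ is disconnected, the diameter is conventionally $\infty$ and the bound is trivial.

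The only delicate point is the additive constant: the crude geometric sum gives $D \ge \log_6 p - 1$ on the nose rather than $D \ge \log_6 p$. Tightening this is essentially cosmetic; one can either round, or observe that after the first step a walk must avoid immediate backtracks, so that the sphere of radius $k \ge 1$ has size at most $6 \cdot 5^{\,k-1}$ and consequently $|B(v,D)| \le 1 + 6(5^D-1)/4$, which removes the constant. I expect no serious obstacle beyond this accounting, because the entire argument only uses the algebraic fact that quadratic polynomials over a field have at most two roots; no Diophantine input about the specific generators $a_1, a_2$ is needed.
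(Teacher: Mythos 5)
Your proposal is correct and follows essentially the same route as the paper, whose entire proof is the observation that the maximal degree is $6$ (two forward images plus at most four square-root preimages) together with the resulting ball/spanning-tree count giving diameter at least $\log_6(p)$. You merely spell out the degree bound and the geometric-series accounting that the paper leaves implicit, so no new idea is involved.
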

\begin{proof}
The maximal degree is $6$. A spanning tree has diameter less than $\log_6(p)$. 
\end{proof}

\question{Can the diameter in $X_p^2$ become smaller than $\log_2(p)$?}

\section{Planar graphs}

We measure that for $p>23$, quadratic graphs with two generators are not planar. If
there are no graphs $K_5$ embedded, we would by Kuratowski's theorem know that they 
must contain homeomorphic stretched utility subgraphs $K_{3,3}$. 

\begin{figure}[H]
\scalebox{0.2}{\includegraphics{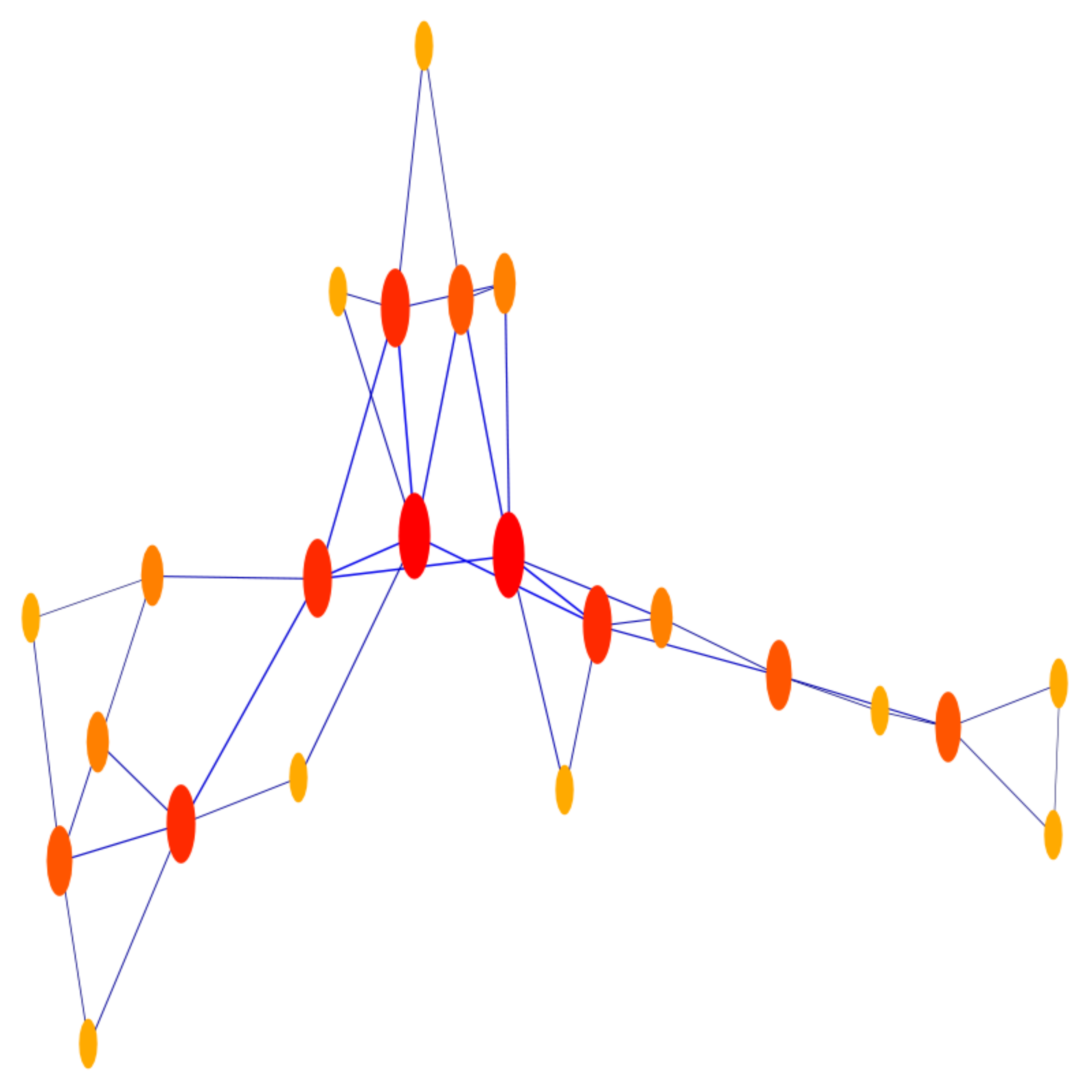}}
\scalebox{0.2}{\includegraphics{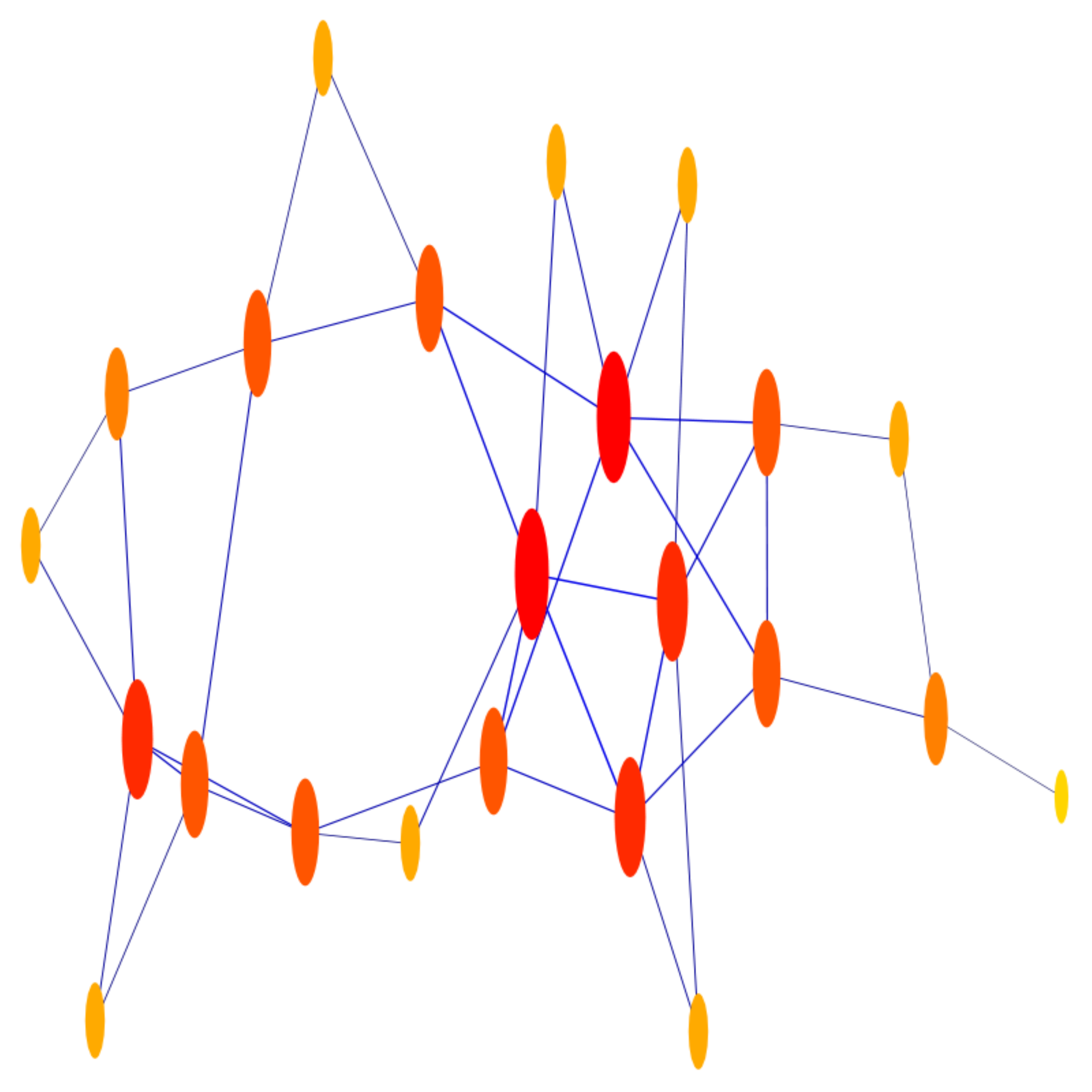}}
\caption{
The graph $(Z_{23},x^2+4,x^2+20)$ and
$(Z_{23},x^2+11,x^2+17)$ quadratic networks which are planar. 
They might be the largest quadratic planar graphs with two 
or more different generators. 
} \end{figure}

The largest to us known planar quadratic graph with two generators is $p=23$.

\question{Is there a planar quadratic orbital network in $X_p^2$  for $p>23$? }

\section{Dimension}

The inductive dimension for graphs \cite{elemente11} is formally related to the inductive 
Brouwer-Menger-Urysohn dimension for topological spaces. Let $S(x)$ denote the unit sphere of a vertex
$x$. Define ${\rm dim}(\emptyset)= -1$ and inductively: 
$$ {\rm dim}(G) = 1+\frac{1}{|V|} \sum_{v \in V} {\rm dim}(S(v))  \; ,  $$
where $S(x)$ is the graph generated by vertices connected to $x$. Isolated points have dimension $0$,
trees with at least one edge or cycle graphs with at least $4$ vertices 
have dimension $1$, a graph $K_{n+1}$ has dimension $n$. \\

Since there are no isolated points, the dimension of quadratic orbital graphs is always $\geq 1$ with 
the exception of $T(x)=x^2$ on $Z_2$. We see that the minimum $1$ 
is attained for all $p>11$ and that the maximum can become larger than $2$ for $p=7$, where the
maximum is $449/210$ for $a=2,b=3$. This seems to be the only case.  \\

\begin{lemma}
For every $p,k,d$ there exists a constant $C=C_m(d) \leq d^{m+1} 2^{m+1}$ 
such that the number of $K_{m+1}$ subgraphs of $G \in X_p^d$ is smaller than $C$. 
\end{lemma}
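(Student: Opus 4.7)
The plan is to reduce the counting of $K_{m+1}$ subgraphs to counting solutions of an over-determined polynomial system and then apply a Bezout-type degree bound. First I would observe that every $K_{m+1}$ on vertices $v_0,\dots,v_m$ realizes each of its $\binom{m+1}{2}$ edges in one of $2d$ ways: either $T_k(v_i)=v_j$ or $T_k(v_j)=v_i$ for some generator $T_k(x)=x^2+a_k$. So a $K_{m+1}$ is specified by an ordered labeling of its edges with an element of $\{+,-\}\times\{1,\dots,d\}$, together with a solution $(v_0,\dots,v_m)$ of the resulting polynomial system. There are at most $(2d)^{\binom{m+1}{2}}$ such labelings to enumerate.

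Next I would fix a labeling and count its solutions: the defining relations form a system of $\binom{m+1}{2}$ polynomial equations of degree $\leq 2$ in the $m+1$ unknowns $v_0,\dots,v_m$. Since $\binom{m+1}{2}\geq m+1$ for $m\geq 2$, this system is at least as constrained as it is free, and generically cuts out a $0$-dimensional variety in $\overline{Z_p}^{m+1}$; by Bezout the number of $Z_p$-solutions is then at most $2^{m+1}$. Multiplying by the number of labelings and dividing by the $(m+1)!$ vertex relabelings yields a total count of $K_{m+1}$'s bounded by a constant depending only on $m$ and $d$, and independent of $p$. To obtain the cleaner form $C_m(d)\leq d^{m+1}\cdot 2^{m+1}$, I would instead pick a spanning tree of the $K_{m+1}$: each of the $m$ tree edges is parameterized by a $(2d)$-fold choice (generator and direction, with preimage multiplicity absorbed), and the remaining $\binom{m}{2}$ non-tree edges act as rigidity constraints that force all remaining freedom into a further set of size $\leq 2d$.

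The main obstacle is the degenerate labelings for which the polynomial system has dependent equations and the solution variety acquires positive dimension, so that Bezout does not directly apply. Such a degeneracy would force an algebraic identity among compositions of generators, for instance a coincidence $T_{k_1}\circ\cdots\circ T_{k_r}=T_{k_1'}\circ\cdots\circ T_{k_s'}$ on a curve, which in turn constrains the shifts $a_1,\dots,a_d$. The hardest part of the proof is ruling out that these degenerate labelings accumulate more than $O_{m,d}(1)$ subgraphs, which I would attack via a case analysis of the possible identities or, more uniformly, via a Lang--Weil-type estimate bounding the number of $Z_p$-points on each degenerate component.
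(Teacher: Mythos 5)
Your plan has a genuine gap, and it sits exactly where the content of the lemma lies. You reduce a labeled $K_{m+1}$ to a system of $\binom{m+1}{2}$ quadratic equations in the $m+1$ unknowns $v_0,\dots,v_m$ and invoke Bezout, but Bezout only gives the bound $2^{m+1}$ after you know that a suitable subsystem is zero-dimensional; you yourself identify the degenerate labelings, whose solution sets have positive dimension, as ``the hardest part'' and then leave them unresolved. Worse, the remedy you propose cannot close the gap: a Lang--Weil type estimate on a component of dimension $r\geq 1$ gives on the order of $p^{r}$ points, so it can never yield a constant independent of $p$, which is precisely what the lemma asserts. What you would actually need is an argument that every positive-dimensional component of such a system forces two coordinates $v_i,v_j$ to coincide identically (so that no $K_{m+1}$ with distinct vertices lives on it), or that such components cannot occur; the proposal contains no argument for either. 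The closing ``spanning tree plus rigidity'' step that is supposed to produce the cleaner constant $d^{m+1}2^{m+1}$ is likewise asserted rather than proved.

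The paper avoids this multivariable degeneracy problem by working in a single variable, and that is the missing idea. Since each vertex has at most $d$ outgoing directed edges, inside a clique one can choose a root $x$ from which every other clique vertex is reached as $T^w(x)$ for a word $w$ in the generators of length at most $m$ (the directed edges of a clique contain such a spanning ordering); there are at most $d^{m+1}$ such word polynomials, each of degree at most $2^{m+1}$, and every edge of the clique then becomes a polynomial equation in the one variable $x$. Over the field $Z_p$ a nonzero polynomial has at most as many roots as its degree, which gives the stated bound $d^{m+1}2^{m+1}$ with no Bezout and no dimension analysis; degeneracy now only means that a one-variable difference of compositions vanishes identically, which is controlled by comparing degrees and leading coefficients rather than by the case analysis of identities your setup would require. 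If you want to salvage your route, you must supply an elimination step of this kind that reduces every degenerate labeling to the one-variable situation (or otherwise shows it forces a vertex coincidence); without it, the bound independent of $p$ is not established.
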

\begin{proof}
The existence of a subgraph is only possible if one of finitely many
systems of finitely many Diophantine equations consisting of polynomials 
has a solution. Given $Z_n$ and the degree of the generating polynomials,
the constant $d$ gives an upper bound for the degree of polynomials involved.
Since a polynomial $f$ in $Z_n$ has less or equal solutions than the degree
and the degree of the polynomials involved is less or equal to $2^{m+1}$
and there are less than $d^{m+1}$ polynomials which can occur.
\end{proof}

Of course, this applies for any polynomial map. For non-prime $p$ we can 
have more solutions. But since 
the number $A(n)$ of solutions to a polynomial equation $f(x)=0 {\rm mod} \;  n$ 
is multiplicative (Theorem 8.1 in \cite{Hua}), the following result should be
true for all $n$. We prove it only for primes: 

\begin{lemma}
For all quadratic networks the dimension goes to $1$ for $n \to \infty$ along primes. 
\end{lemma}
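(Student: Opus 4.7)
The plan is to leverage the preceding clique-counting lemma: the number of triangles in any $G \in X_p^d$ is bounded by a constant $C_2(d)$ independent of $p$, so only a bounded number of vertices can make a nonzero contribution to the dimension sum.

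Unwinding the recursion
\[
\dim(G) = 1 + \frac{1}{p}\sum_{v\in V}\dim(S(v)),
\]
I would first establish three structural facts by induction on the defining formula: $\dim(S(v)) \geq 0$ whenever $S(v)$ is non-empty; $\dim(S(v)) = 0$ precisely when $S(v)$ is non-empty and edge-free; and consequently $\dim(S(v)) > 0$ exactly when $v$ lies in some triangle of $G$. Since each generator $T_i$ contributes at most three neighbors of $v$ (one successor $T_i(v)$ and up to two preimages under $T_i$), $|S(v)| \leq 3d$ and hence $\dim(S(v)) \leq 3d - 1$ uniformly in $v$ and $p$.

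Next I partition $V = V_{\mathrm{iso}} \cup V_0 \cup V_\triangle$ into isolated vertices, non-isolated vertices lying in no triangle, and vertices lying in at least one triangle. Vertices in $V_0$ contribute exactly $0$ to the sum. By the preceding lemma $|V_\triangle| \leq 3\,C_2(d)$. Isolated vertices are described by a fixed finite system of polynomial congruences over $Z_p$ of bounded degree (each generator must fix $v$, and $v$ must be its own unique preimage under each generator), so the same Bezout-type argument used in the preceding lemma gives $|V_{\mathrm{iso}}| \leq I(d)$ uniformly in $p$. Assembling,
\[
-I(d) \;\leq\; \sum_{v\in V}\dim(S(v)) \;\leq\; 3\,C_2(d)\,(3d-1),
\]
and dividing by $p$ shows $|\dim(G) - 1| \to 0$ as $p \to \infty$ along primes.

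The main obstacle is really just the first structural observation about $\dim(S(v))$; once one notes that in an edge-free non-empty graph every unit sphere is empty, the recursion collapses at one level to give $\dim(S(v)) = 0$, and everything else reduces to the uniform-in-$p$ counting of polynomial congruences already handled in the preceding lemma.
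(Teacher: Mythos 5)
Your argument is correct and follows essentially the same route as the paper: both proofs invoke the preceding clique-counting lemma to conclude that only a bounded (independent of $p$) number of vertices can have a unit sphere of positive dimension, so the average sphere dimension tends to $0$ and $\dim(G) \to 1$. Your version simply fills in the details the paper leaves implicit --- the uniform bound $\dim(S(v)) \leq 3d-1$, the equivalence of positive sphere dimension with membership in a triangle, and the (at most finitely many) isolated vertices contributing $-1$ --- so it is a sharpened write-up of the same proof rather than a different one.
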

\begin{proof}
We have seen that for fixed $n$ and $d$ quadratic polynomial maps
there is a bound $C_m(n,d)$ such 
that the quadratic orbital graph on $Z_n$ has less or equal $C_m$ 
sub graphs $K_{m+1}$. This means that the dimension of a vertex can be
$m$ only for a bounded number of vertices. Since for $n \to \infty$ the
number of vertices goes to infinity, the inductive dimension goes to zero. 
\end{proof}

A modeling question is: 

\question{How large does $d$ and $p$ have to 
be chosen in order to get an orbital network of given dimension?}

\begin{figure}[H]
\scalebox{0.2}{\includegraphics{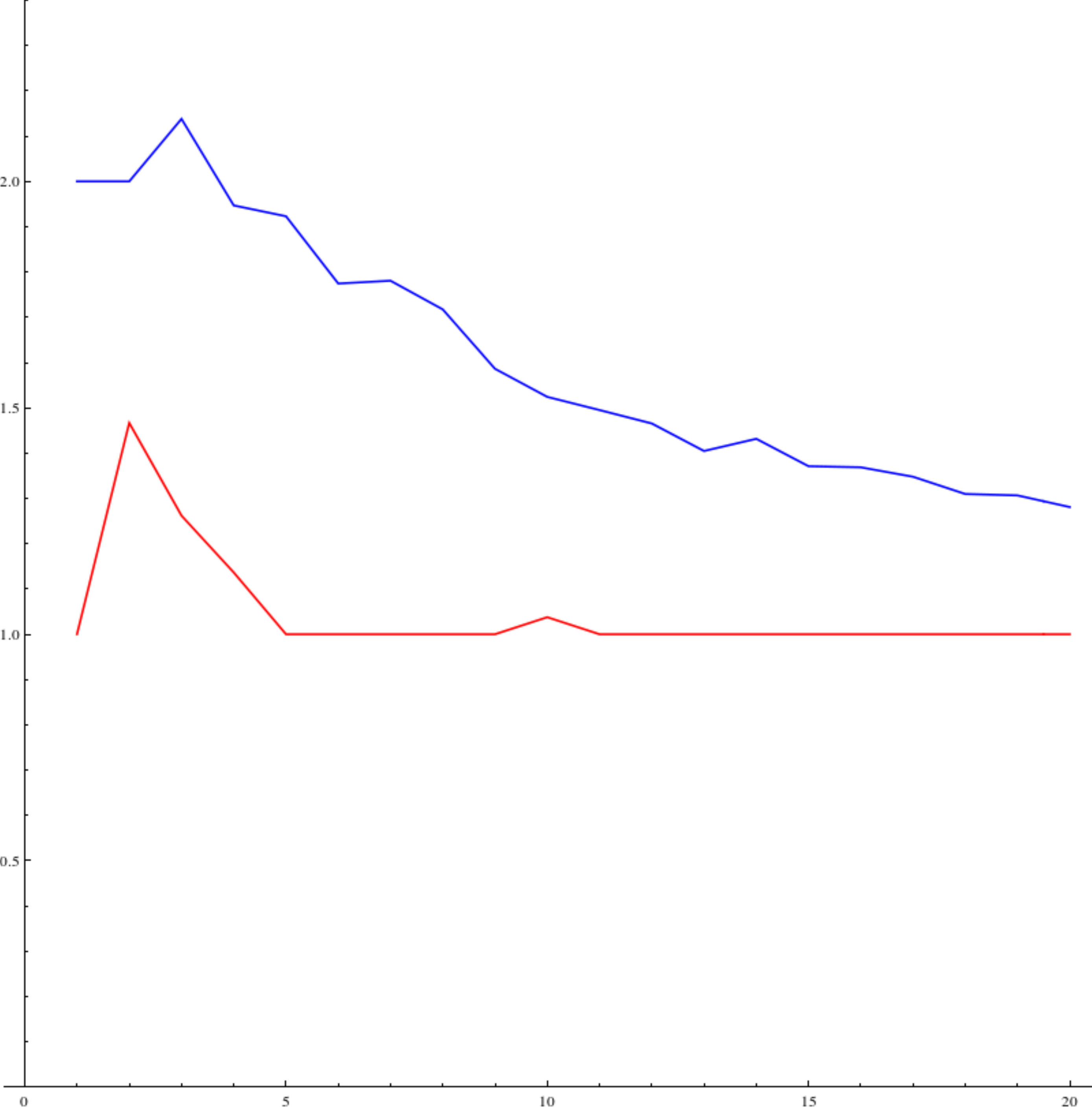}}
\caption{
The minimal and maximal dimension of quadratic orbital graphs with two generators 
on $Z_p$ for $p \leq 73$. 
} \end{figure}

\vspace{12pt}
\bibliographystyle{plain}

\end{document}